\documentclass[a4paper,11pt]{amsart}
\usepackage[left=2.7cm,right=2.7cm,top=3.5cm,bottom=3cm]{geometry}

\usepackage{amsthm,amssymb,amsmath,amsfonts,mathrsfs,amscd,amsbsy,dsfont,verbatim}
\usepackage{stmaryrd,dsfont}
\usepackage[new]{old-arrows}
\usepackage[latin1]{inputenc}
\usepackage[all,cmtip]{xy}
\usepackage{latexsym}
\usepackage{longtable}
\usepackage{mathtools}
\usepackage{marginnote}
\usepackage{graphicx}
\usepackage{dutchcal}

\newcounter{lettera}

\usepackage[pagebackref]{hyperref}

\mathtoolsset{showonlyrefs}

\usepackage{graphicx}
\newcommand{\Bmu}{\mbox{$\raisebox{-0.59ex}
  {$l$}\hspace{-0.18em}\mu\hspace{-0.88em}\raisebox{-0.98ex}{\scalebox{2}
  {$\color{white}.$}}\hspace{-0.416em}\raisebox{+0.88ex}
  {$\color{white}.$}\hspace{0.46em}$}{}}

\numberwithin{equation}{section}

\newfont{\cyr}{wncyr10 scaled 1100}
\newfont{\cyrr}{wncyr9 scaled 1000}

\theoremstyle{plain}
\newtheorem{theorem}{Theorem}[section]

\newtheorem{proposition}[theorem]{Proposition}
\newtheorem{lemma}[theorem]{Lemma}
\newtheorem{corollary}[theorem]{Corollary}

\theoremstyle{definition}
\newtheorem{definition}[theorem]{Definition}
\newtheorem{assumption}[theorem]{Assumption}

\theoremstyle{remark}
\newtheorem{remark}[theorem]{Remark}

\newtheorem{remark/notation}[theorem]{Remark/Notation}
\newtheorem{notation/convention}[theorem]{Notation/Convention}

\newcommand{\Q}{\mathds Q}
\newcommand{\N}{\mathds N}
\newcommand{\Z}{\mathds Z}

\newcommand{\R}{\mathds R}
\newcommand{\C}{\mathds C}

\newcommand{\F}{\mathds F}
\newcommand{\T}{\mathds T}

\newcommand{\defeq}{\vcentcolon=}

\DeclareMathOperator{\Frob}{Frob}

\DeclareMathOperator{\Hom}{Hom}

\DeclareMathOperator{\Gal}{Gal}
\DeclareMathOperator{\GL}{GL}

\DeclareMathOperator{\SL}{SL}
\DeclareMathOperator{\Sel}{Sel}

\DeclareMathOperator{\CH}{CH}
\DeclareMathOperator{\AJ}{AJ}

\DeclareMathOperator{\im}{im}

\DeclareMathOperator{\f}{\boldsymbol f}

\newcommand{\cores}{\mathrm{cores}}

\newcommand{\tr}{\mathrm{tr}}
\newcommand{\ord}{\mathrm{ord}}

\newcommand{\an}{\mathrm{an}}

\newcommand{\cg}{\mathcal{g}}
\newcommand{\cCH}{\mathcal{CH}}

\newcommand{\Sha}{\mbox{\cyr{X}}}

\usepackage[usenames]{color}
\definecolor{Indigo}{rgb}{0.2,0.1,0.7}
\definecolor{Violet}{rgb}{0.5,0.1,0.7}
\definecolor{White}{rgb}{1,1,1}
\definecolor{Green}{rgb}{0.1,0.9,0.2}

\newcommand{\longepi}{\mbox{\;$\relbar\joinrel\twoheadrightarrow$\;}}

\newfont{\gotip}{eufb10 at 12pt}

\newcommand{\cO}{{\mathcal O}}

\newcommand{\m}{\mathfrak{m}}
\newcommand{\p}{\mathfrak{p}}

\newcommand{\fP}{\mathfrak{P}}

\newcommand{\hf}{\boldsymbol f^{(p)}}
\newcommand{\hnf}{f^{(p)}}

\DeclareMathOperator{\GS}{GS}

\makeatletter
\@namedef{subjclassname@2020}{%
  \textup{2020} Mathematics Subject Classification}
\makeatother

\begin{document}

\title[Analytic rank one propagation in Hida families]{Analytic rank one propagation in Hida families}
\author{Stefano Vigni}

\thanks{The research by the author is partially supported by the GNSAGA group of INdAM and by the MUR Excellence Department Project awarded to Dipartimento di Matematica, Universit\`a di Genova, CUP D33C23001110001.}

\begin{abstract}
Let $f$ be a non-CM newform of weight $k\geq4$, level $N$ and trivial Nebentypus. Let $p\nmid N$ be an odd prime number that is ordinary for $f$ and denote by $\hf$ the $p$-adic Hida family passing through $f$. Assuming very specific instances of two general conjectures in arithmetic algebraic geometry (injectivity of $p$-adic Abel--Jacobi maps, positive definiteness of archimedean height pairings \emph{\`a la} Gillet--Soul\'e), we prove that, for all but finitely many $p$ as above, if the analytic rank of $f$ is $1$, then all but finitely many specializations of $\hf$ of even weight and trivial Nebentypus have analytic rank $1$. This result provides evidence for Greenberg's ``minimality conjecture'' on analytic ranks in families of modular forms.
\end{abstract}

\include{thebibliography}

\address{Dipartimento di Matematica, Universit\`a di Genova, Via Dodecaneso 35, 16146 Genova, Italy}
\email{stefano.vigni@unige.it}

\subjclass[2020]{11F11 (primary), 14C25 (secondary)}

\keywords{Modular forms, Hida families, $L$-functions, Heegner cycles, big Heegner points}

\maketitle

\section{Introduction}

Let $p$ be a prime number. The study of the variation of central derivatives of $L$-functions in $p$-adic families of modular forms is a key theme in arithmetic geometry. A guiding principle in this area is a conjecture of Greenberg predicting that the analytic ranks of cusp forms of even weight and trivial Nebentypus in a $p$-adic Hida family should be as small as allowed by the corresponding functional equations, with at most finitely many exceptions (\cite{Greenberg-CRM}).

To date, evidence for this conjecture in the rank $1$ setting has been anchored to weight $2$. In fact, by starting with a newform $f$ of weight $2$, one can exploit the arithmetic of the abelian variety of $\GL_2$-type that is attached to $f$ by the Eichler--Shimura construction, leveraging (among other ingredients) the Gross--Zagier formula for modular abelian varieties over $\Q$ to translate the analytic rank $1$ property into a condition on Heegner points. In \cite{vigni-hida}, assuming the positive definiteness of certain (archimedean) height pairings \emph{\`a la} Gillet--Soul\'e between Heegner cycles, this weight-$2$ machinery was used to obtain a result of the following type in the direction of Greenberg's conjecture: if a Hida family $\f$ contains a weight-$2$ newform with trivial Nebentypus and analytic rank $1$, then infinitely many even-weight specializations of $\f$ with trivial Nebentypus have analytic rank $1$ (for an analogous result for $p$-adic Coleman families, see \cite{PPV}). However, this leaves open a natural structural question: is analytic rank $1$ a property that can be propagated along a family from any higher-weight classical point, without relying on the specific arithmetic of Mordell--Weil groups in weight $2$?

Under suitable technical assumptions, the present paper answers the question above in the affirmative. More precisely, let $f\in S_k(\Gamma_0(N))$ be a non-CM newform of even weight $k\geq4$, level $N$ and trivial Nebentypus; suppose that the analytic rank $r_\an(f)$ of $f$, \emph{i.e.}, the order of vanishing of $L(f,s)$ at $s=k/2$, is $1$. Fix a prime number $p\nmid2N$ that is ordinary for $f$ and assume that the residual $p$-adic Galois representation attached to $f$ is irreducible: this rules out only finitely many ordinary primes for $f$. We consider the $p$-adic Hida family $\hf$ of tame level $N$ passing through $f$ and ask how the analytic rank behaves at the specializations $\hnf_\kappa$ of $\hf$ of even weight $\kappa\geq2$ and trivial Nebentypus: these specializations correspond to the even integers $\kappa\geq2$ such that $\kappa\equiv k\pmod{p-1}$. Greenberg's ``minimality conjecture'' predicts that the generic analytic rank along $\hf$ should be $1$, which means that one should have $r_\an\bigl(\hnf_\kappa\bigr)=1$ for all but finitely many even integers $\kappa\geq2$ such that $\hnf_\kappa$ has trivial Nebentypus. Theorem~\ref{main-thm}, which is our main result, confirms this prediction, conditionally on two hypotheses of independent interest: injectivity of a $p$-adic Abel--Jacobi map on the line spanned by a distinguished imaginary quadratic Heegner cycle (Assumption~\ref{AJ-ass}) and positive definiteness of S.-W. Zhang's height pairings \emph{\`a la} Gillet--Soul\'e on the relevant Kuga--Sato varieties (Assumption~\ref{GS-ass}). Both assumptions are very special instances of general conjectures that are standard, if largely open, in arithmetic algebraic geometry.

From here on, $\kappa\geq2$ varies over the set of integers with $\kappa\equiv k\pmod{p-1}$ and we work under Assumptions \ref{AJ-ass} and \ref{GS-ass}. As in \cite{vigni-hida}, our strategy builds on a combination of the $p$-adic interpolation of Nekov\'a\v{r}'s Heegner cycles (\cite{Nek}) by Howard's big Heegner points (\cite{Howard-Inv}), due to Castella and Ota (\cite{CasHeeg}, \cite{ota-JNT}), with Zhang's formula of Gross--Zagier type for higher-weight modular forms (\cite{Zhang-heights}), which expresses the central derivative $L'\bigl(\hnf_\kappa/K,\kappa/2\bigr)$ in terms of the Gillet--Soul\'e (archimedean) height of a Heegner-type cycle, where $K$ is an auxiliary imaginary quadratic field such that
\begin{itemize}
\item every prime factor of $Np$ splits in $K$;
\item if $\eta_K$ is the Dirichlet character associated with $K$, then $r_\an(f\otimes\eta_K)=0$.
\end{itemize}
Analytic rank~$1$ is propagated from $f$ along $\hf$ by showing, via a torsionfreeness argument for an extended Selmer group over $K$, that the big Heegner cycle over $K$ attached to $\hf$ has non-torsion specializations at all but finitely many $\kappa\geq4$ as above. It follows that, for all such $\kappa$, the Heegner cycle over $K$ attached to $\hnf_\kappa$ is nontrivial, and then the nonvanishing of this cycle is converted into the nonvanishing of $L'\bigl(\hnf_\kappa/K,\kappa/2\bigr)$ via Zhang's formula. Finally, a basic factorization of $L$-functions and a parity argument coming from the constancy of root numbers of higher weight specializations of a Hida family pin the analytic rank of $\hnf_\kappa$ down to $1$, as desired.

While the analytic part of \cite{vigni-hida} shares with the present paper some of its main ingredients (\emph{e.g.}, Zhang's formula for $L$-derivatives, specialization of big Heegner points), the settings of the two articles present various differences, which we would like to highlight:
\begin{itemize}
\item in \cite{vigni-hida}, the initial analytic rank $1$ input for the propagation arguments is in weight $2$ (in fact, the abelian variety attached to the weight-$2$ specialization of the Hida family, whose arithmetic is systematically exploited, is assumed to be an elliptic curve);
\item in \cite{vigni-hida}, the tame level $N$ of the Hida family is taken to be square-free (this allowed us to easily use results of Fischman on the image of $\Lambda$-adic Galois representations (\cite{Fischman}), which are not needed here); 
\item instead of the congruence $\kappa\equiv2\pmod{p-1}$, which detects specializations of even weight and trivial Nebentypus of the Hida family appearing in \cite{vigni-hida}, technical reasons forced us to impose in \cite{vigni-hida} the stronger condition $\kappa\equiv2\pmod{2(p-1)}$, so the main results of \cite{vigni-hida} concern only infinitely many specializations with trivial Nebentypus.
\end{itemize}
On the other hand, no analogue of the nontriviality of the $K$-rational Heegner cycle $z_{f,K}$ under the $p$-adic Abel--Jacobi map $\AJ_{p,K,k}$ (Assumption \ref{AJ-ass}) needs to be imposed in \cite{vigni-hida}; this is due to the fact that the role of $\AJ_{p,K,k}$ is played in \cite{vigni-hida} by a mod-$p$ Kummer map on a Mordell--Weil group of an elliptic curve, which is well known to be injective. 

As a concluding remark, observe that, under similar assumptions, our strategy of proof of Theorem \ref{main-thm} can be adapted to yield an analogous result on the propagation of analytic rank $0$ in Hida families. However, such a rank $0$ statement can also be obtained (with no need for counterparts of Assumptions \ref{AJ-ass} and \ref{GS-ass}) by combining work of Mazur--Kitagawa (\cite{kitagawa}) and of Kato (\cite{Kato}), as explained in \cite[Theorem 7]{Howard-derivatives}. Therefore, we will not elaborate on this case here (see Remark \ref{0-rem} for more details).

The organization of this paper is as follows. Section~\ref{hida-sec} reviews the relevant background on Hida families, big Galois representations and specialization maps. Section~\ref{analytic-sec} contains the proof of our main theorem: \S\S \ref{f-subsec}--\ref{algebraic-subsec} introduce the newform $f$, the $p$-adic Hida family $\hf$, the auxiliary set of primes $\Sigma_f$ and the imaginary quadratic fields that will play a crucial role in our arguments; \S \ref{zhang-subsec} recalls Zhang's formula of Gross--Zagier type for higher (even) weight cusp forms; \S \ref{big-subsec} establishes nontriviality and nontorsionness of the relevant Heegner cycles; finally, \S \ref{main-subsec} assembles these ingredients into the proof of Theorem~\ref{main-thm}.

\subsection{Notation and conventions} \label{nc-subsec}

In this article, $\overline\Q$ is the algebraic closure of $\Q$ inside $\C$ and we write $\iota_\infty:\overline\Q\hookrightarrow\C$ for the corresponding inclusion. For each prime number $p$, we fix an embedding $i_p:\overline\Q\hookrightarrow\overline{\Q_p}$, which determines a prime ideal $\fP$ of the ring of integers of $\overline\Q$ above $p$. Moreover, we fix an embedding $\iota_{p,\infty}:\overline{\Q_p}\hookrightarrow\C$ such that $\iota_{p,\infty}\circ i_p=\iota_\infty$. Finally, $G_\Q\defeq\Gal(\overline\Q/\Q)$ is the absolute Galois group of $\Q$; an analogous notation will be adopted for other number fields as well.

\section{Review of Hida families} \label{hida-sec}

We recall basic notions of Hida's theory of $p$-adic families of modular forms, where $p$ is a prime number. For details and proofs, see, \emph{e.g.}, \cite{hida86b}, \cite{hida86a}, \cite[Ch. 7]{hida-elementary}.

\subsection{Hida families of modular forms} \label{hida-subsec}

Let $N\geq1$ be an integer and let $p$ be a prime number such that $p\nmid N$. Set $\Gamma\defeq1+p\Z_p\subset\Z_p^\times$. 

\subsubsection{$p$-adic Hida families and their specializations}

A \emph{$p$-adic Hida family} of tame level $N$ consists of
\begin{itemize}
\item a complete local noetherian domain $\mathcal R$ that is finitely generated and flat as a module over the Iwasawa algebra $\cO[\![\Gamma]\!]\simeq\cO[\![T]\!]$, where $\cO$ is a suitable finite extension of $\Z_p$;
\item a (dense) collection of distinguished points
\[ \mathcal X^{\text{arith}}\subset\Hom_{\text{cont}}\bigl(\mathcal R,\overline{\Q_p}\,\bigr) \]
called \emph{arithmetic morphisms};
\item a formal $q$-expansion $\f=\f(q)=\sum_{n\geq1}a_n(\f)q^n\in\mathcal R[\![q]\!]$
\end{itemize}
such that for all $\eta\in\mathcal X^{\text{arith}}$ the power series 
\[ f_\eta=f_\eta(q)\defeq\sum_{n\geq1}\eta\bigl(a_n(\f)\bigr)q^n\in\overline{\Q_p}[\![q]\!] \]
is the $q$-expansion of an ordinary cuspidal eigenform on $\Gamma_1(Np^r)$ for some $r=r_\eta\geq1$. The modular form $f_\eta$ is called the \emph{specialization of $\f$ at $\eta$}. By definition, a continuous homomorphism $\eta:\mathcal R\rightarrow\overline{\Q_p}$ is \emph{arithmetic} if the composition 
\[ \Gamma\longrightarrow\mathcal R^\times\overset\eta\longrightarrow\overline{\Q_p}^\times \]
of $\eta$ with the canonical map $\Gamma\rightarrow\mathcal R^\times$ has the form 
\begin{equation} \label{arithmetic-eq}
\gamma\longmapsto\psi(\gamma)\gamma^{k-2}
\end{equation}
for some integer $k\geq2$ and some finite order character $\psi$ of $\Gamma$. The kernels of arithmetic morphisms are called \emph{arithmetic primes} of $\mathcal R$. The integer $k$ in \eqref{arithmetic-eq} is the \emph{weight} of the arithmetic morphism (or of the associated arithmetic prime), while $\psi$ is its \emph{wild character}. In the rest of the present paper, $\mathtt{ArithSpec}(\mathcal R)$ will be the set of all arithmetic primes of $\mathcal R$. Given $\wp\in\mathtt{ArithSpec}(\mathcal R)$, we will write $f_\wp$ for the specialization of $\f$ at the arithmetic morphism whose kernel is $\wp$. Via the embedding $\iota_{p,\infty}$ from \S \ref{nc-subsec}, we always view $f_\wp$ as a cusp form with complex Fourier coefficients.

\begin{remark}
By a slight abuse of terminology, we will refer to the $q$-expansion $\f$ as a $p$-adic Hida family (of tame level $N$) with coefficients in $\mathcal R$. 
\end{remark}

\subsubsection{Specializations at even integers}

Let $\f$ be a $p$-adic Hida family with coefficients in $\mathcal R$ (\emph{cf.} Remark \ref{hida-rem}). Given an even integer $\kappa\geq2$, the continuous homomorphism
\[
\Gamma\longrightarrow\cO^\times,\quad\gamma\longmapsto\gamma^{\kappa-2},
\]
can be extended (essentially by the ``lying over'' theorem in commutative algebra) to an arithmetic morphism $\eta_\kappa:\mathcal R\rightarrow\overline{\Q_p}$ with trivial wild character. For simplicity, with notation as above, we shall call $f_\kappa\defeq f_{\eta_\kappa}$ the \emph{specialization of $\f$ at $\kappa$}.

\subsubsection{$p$-stabilizations and Hida families}

Now let $f\in S_k(\Gamma_1(M))$ be a $p$-ordinary normalized newform of weight $h\geq2$ and level $M\geq1$. Set
\[ 
f_0\defeq\begin{cases}f&\text{if $p\,|\,M$},\\[3mm]f^\sharp&\text{if $p\nmid M$}, \end{cases} \]
where $f^\sharp$ denotes the $p$-stabilization of $f$ (see, \emph{e.g.}, \cite[\S 2.4]{vigni-hida}). The cusp form $f_0$ can be characterized as the unique (normalized) $p$-ordinary eigenform of weight $h$ and level divisible by $p$ with the property that $a_n(f_0)=a_n(f)$ except for those $n$ divisible by $p$ (\cite[Lemma 3.3]{hida-measure}). A fundamental result of Hida (\cite[Corollary 3.7]{hida86a}, \cite[\S 7.3, Theorem 3]{hida-elementary}) ensures that there exists a unique Hida family $\f\in\mathcal R[\![q]\!]$ of tame level $M$ such that $f_0=f_\wp$ for some arithmetic prime $\wp$ of $\mathcal R$; this is often described by saying that $\f$ \emph{passes through} $f$. 

\begin{remark} \label{hida-rem}
In this context, the expression ``$p$-ordinary'' used above is somewhat imprecise: see, \emph{e.g.}, \cite[Remark 2.12]{vigni-hida} for details.    
\end{remark}

\subsection{Big Galois representations} \label{big-subsec}

Let $\f$ be a $p$-adic Hida family of tame level $N$ with coefficients in $\mathcal R$.

\subsubsection{The big Galois representation $\T_{\f}$} \label{big-subsubsec}

By Hida theory (\cite[Theorem 2.1]{hida86b}), there is a ``big'' representation $\T_{\f}$ of $G_\Q$ that (under standard assumptions on residual representations, \emph{cf.} \S \ref{consequences-subsubsec}) is a free $\mathcal R$-module of rank $2$ and satisfies the following property: for every arithmetic prime $\wp$ of $\mathcal R$, the quotient $\T_{\f,\wp}/\wp\T_{\f,\wp}$ is equivalent (after a finite base change) to the dual $V_{f_\wp}^*$ of the $p$-adic representation $V_{f_\wp}$ of $G_\Q$ attached to $f_\wp$ (see, \emph{e.g.}, \cite[(1.5.5)]{NP}). The representation
\[
\rho_{\f}:G_\Q\longrightarrow\GL(\T_{\f})\simeq\GL_2(\mathcal R) 
\]
is unramified outside $Np$ and satisfies the equality
\[ \tr\bigl(\rho_{\f}(\Frob_\ell)\bigr)=a_\ell(\f) \]
for all prime numbers $\ell\nmid Np$, where $\Frob_\ell$ denotes the conjugacy class in $G_\Q$ of an arithmetic Frobenius at $\ell$.

\subsubsection{The residual representation $\overline{\T}_{\f}$}

Let $\m_{\mathcal R}$ be the maximal ideal of $\mathcal R$. The residual representation attached to $\f$ is $\overline{\T}_{\f}\defeq\T_{\f}/\m_{\mathcal R}\T_{\f}$. Under the assumptions stated in \S \ref{consequences-subsubsec}, this is a $2$-dimensional representation of $G_\Q$ over the residue field of $\mathcal R$ that is equivalent (up to a finite base change) to the residual representation attached to $f_\wp$ for every arithmetic prime $\wp$ of $\mathcal R$.

\subsubsection{Specialization maps in cohomology} \label{spec-cohom-subsubsec}

Let $\T_{\f}^\dagger$ be its critical twist of $\T_{\f}$ (see, \emph{e.g.}, \cite[Definition 2.1.3]{Howard-Inv}). For every even integer $\kappa\geq4$ and number field $L$, let 
\begin{equation} \label{spec-eq}
\rho_{L,\kappa}:H^1\bigl(L,\T_{\f}^\dagger\bigr)\longrightarrow H^1\bigl(L,V^\dagger_{f_\kappa}\bigr)
\end{equation}
be the specialization map over $L$ at weight $\kappa$ (see, \emph{e.g.}, \cite[\S 5.1]{vigni-hida}). If $\wp_\kappa\in\mathtt{ArithSpec}(\mathcal R)$ corresponds to $\kappa$, then there is an isomorphism $\T_{\f,\wp_\kappa}^\dagger\big/\wp_\kappa\T_{\f,\wp_\kappa}^\dagger\simeq V_{f_\kappa}^\dagger$, so the map $\rho_{L,\kappa}$ in \eqref{spec-eq} induces a specialization map
\begin{equation} \label{spec-eq2}
\rho_{L,\kappa}:H^1\bigl(L,\T_{\f,\wp_\kappa}^\dagger\bigr)\longrightarrow H^1\bigl(L,V^\dagger_{f_\kappa}\bigr),
\end{equation}
which (by a slight abuse of notation) will be denoted by the same symbol. From now on, we always (tacitly) fix our specialization maps as explained in \cite[\S 5.1]{vigni-hida}.

\section{Analytic rank one propagation} \label{analytic-sec}

In this section, we prove the main result of this paper, \emph{i.e.}, the nonvanishing of central derivatives of $L$-functions of all but finitely many specializations of even weight and trivial Nebentypus within a suitable Hida family.

\subsection{The newform $f$ and the Hida family $\hf$} \label{f-subsec}

We introduce the newform and the Hida family passing through it in terms of which we shall state our main result.

\subsubsection{Analytic ranks and root numbers} \label{analytic-subsubsec}

Quite generally, let $\cg\in S_h(\Gamma_0(N))$ be a newform of weight $h\geq2$, level $N$ and trivial Nebentypus; it is well known that if $\Gamma(s)$ is the classical $\Gamma$-function and $\Lambda(\cg,s)\defeq(2\pi)^{-s}\cdot N^{s/2}\cdot\Gamma(s)\cdot L(\cg,s)$ is the completed $L$-function of $\cg$, then there is a functional equation
\[
\Lambda(\cg,s)=W(\cg)\cdot\Lambda(\cg,h-s),
\]
where $W(\cg)\in\{\pm1\}$ is the \emph{root number} of $\cg$. 

\begin{definition}
The \emph{analytic rank} of $\cg$ is $r_\an(\cg)\defeq\ord_{s=h/2}L(\cg,s)\in\N$.
\end{definition}

It can be shown that $W(g)$ controls the parity of $r_\an(\cg)$, \emph{i.e.}, $W(\cg)=(-1)^{r_\an(\cg)}$.

\subsubsection{The newform $f$} \label{f-subsubsec}

Let us fix a newform $f\in S_k(\Gamma_0(N))$ of even weight $k\geq2$, level $N\geq3$ and trivial Nebentypus; the $q$-expansion of $f$ will be denoted by $f(q)=\sum_{n\geq1}a_n(f)q^n$. Throughout this paper, we assume that
\begin{itemize}
\item $f$ has no complex multiplication (CM) in the sense of \cite[p. 34, Definition]{ribet};
\item $r_\an(f)=1$.
\end{itemize}
Let $\Q_f\defeq\Q\bigl(a_n(f)\mid n\geq1\bigr)\subset\overline\Q$ be the Hecke field of $f$, which is a totally real number field. Denote by $\cO$ the ring of integers of $\Q_f$, write $\cO_{\mathfrak l}$ for the completion of $\cO$ at a prime $\mathfrak l$ of $\Q_f$ and for a prime number $\ell$ set $\cO_\ell\defeq\cO\otimes_\Z\Z_\ell=\prod_{\mathfrak l\mid\ell}\cO_{\mathfrak l}$. Let us fix once and for all a field embedding $\Q_f\hookrightarrow\R$.

\begin{remark}
A condition, which we will not assume in this paper, that guarantees that $f$ is not a CM form is that $N$ be square-free.
\end{remark}

\subsubsection{The Hida family $\hf$}

Let $f$ be the newform from \S \ref{f-subsubsec}. Let $p$ be a prime number such that 
\begin{itemize}
\item $p\nmid2N$; 
\item $f$ is $p$-ordinary.    
\end{itemize}
By analogy with what happens in weight $2$, we expect the second condition to be satisfied by infinitely many $p$. Let $\hf$ be the $p$-adic Hida family passing through $f$ (or, rather, through the $p$-stabilization of $f$, \emph{cf.} \S \ref{hida-subsec}) and for every even integer $\kappa\geq2$ write $\hnf_\kappa$ for the specialization of $\hf$ at $\kappa$. 

\subsubsection{Big image} \label{big-image-subsubsec}

Let $p$ be a prime number. Denote by
\[ \rho_{f,p}:G_\Q\longrightarrow\GL_2(\cO_p) \] 
the $p$-adic Galois representation attached to $f$ and $p$. We say that $\rho_{f,p}$ has \emph{big image} if there is an inclusion
\[ 
\bigl\{A\in\GL_2(\cO_p)\mid\det(A)\in(\Z_p^\times)^{k-1}\bigr\}\subset\im(\rho_{f,p}). 
\]
In the proof of the following result, for which we refer to \cite{ribet2}, the requirement that $f$ be not CM comes into play.

\begin{theorem}[Ribet] \label{big-thm}
The representation $\rho_{f,p}$ has big image for all but finitely many $p$.
\end{theorem}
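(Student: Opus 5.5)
This is Ribet's large-image theorem for non-CM newforms of weight $k\geq2$ (\cite{ribet2}). The plan follows Ribet's strategy, with Momose's refinement for the inner twists. The first step is to identify the coefficient field over which the image is large. The non-CM hypothesis lets us consider the group $\Gamma_f$ of \emph{inner twists} $(\gamma,\chi)$ of $f$, where $\gamma\in\mathrm{Aut}(\Q_f)$ and $\chi$ is a Dirichlet character with $a_\ell(f)^\gamma=\chi(\ell)a_\ell(f)$ for almost all primes $\ell$. Let $F\subset\Q_f$ be the fixed field of the $\gamma$'s, and let $H\subset G_\Q$ be the open subgroup cut out by the kernels of the characters $\chi$.

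The second step is the $p$-adic (Lie-algebra) statement. By Serre--Sen-type arguments, the Zariski closure of $\rho_{f,p}(H)$ is an open subgroup of a form of $\mathrm{Res}_{F_p/\Q_p}\GL_2$, determinant aside. Here Hodge--Tate weights $\{0,k-1\}$ with $k\geq2$ rule out finite image, the non-CM hypothesis rules out abelian (Cartan) image, and the inner twists account for all further coincidences of traces. The third step passes from open image to full image for almost all $p$, and this is the genuinely finite-exceptional-set statement. Consider the reduction $\bar\rho_{f,\mathfrak l}$ for $\mathfrak l\mid p$.

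1. For $p$ large, $\bar\rho_{f,\mathfrak l}$ is irreducible. Otherwise $a_\ell(f)\equiv \ell^{a}+\ell^{b}$ modulo $\mathfrak l$ for all $\ell\nmid Np$, with $\{a,b\}=\{0,k-1\}$ by Fontaine--Laffaille for $p>k$. This forces $p$ to divide a fixed nonzero quantity, namely the norm of $a_\ell(f)-1-\ell^{k-1}$ for a suitable $\ell$; that quantity is nonzero by the Ramanujan bound.
2. The image is not dihedral for large $p$. Otherwise $a_\ell(f)\equiv0$ on a density-$1/2$ set of $\ell$ cut out by a quadratic character unramified outside $Np$. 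For $p>2k-1$, the tame inertia at $p$ rules out ramification of that character at $p$, leaving finitely many characters, and each of them is excluded by non-CM via a fixed $\ell$.
3. The image is not exceptional ($A_4$, $S_4$, $A_5$ projectively) for large $p$. This follows from Serre's tame inertia argument: the fundamental characters of level $1$ or $2$ with exponent $k-1$ have order exceeding $5$ once $p>4k$, say.

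By Dickson's classification, the projective image of $H$ therefore contains $\mathrm{PSL}_2$ of the residue field. The fourth step lifts this to characteristic zero, using the lemma that a closed subgroup of $\SL_2(\cO_{\mathfrak l})$ surjecting onto $\SL_2(\cO/\mathfrak l)$ is everything when $p\geq5$, unramified. The determinant condition $\det\in(\Z_p^\times)^{k-1}$ then comes from the cyclotomic character $\chi_p^{k-1}$.

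The main obstacle is step~2 of the reduction analysis together with controlling the residue field. One must show that the trace field of $\bar\rho(H)$ is exactly $\cO_F/\mathfrak l\cap\cO_F$, and reconcile this with the ring $\cO_p$ in the definition of big image. When $\Gamma_f$ is nontrivial the literal inclusion into $\GL_2(\cO_p)$ can fail, so the statement must be read with $\cO_p$ replaced by the appropriate ring attached to $F$. For the trivial-Nebentypus case at hand I would handle this by appealing to Momose and Ribet's ``On $\ell$-adic representations attached to modular forms II'' rather than reproving it.
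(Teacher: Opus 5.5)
Your proposal is sound, and on one point more accurate than the paper, but it takes a different route. The paper's proof is a bare citation: from the non-CM hypothesis alone it reads off the inclusion $\{A\in\GL_2(\cO_p)\mid\det(A)\in(\Z_p^\times)^{k-1}\}\subset\im(\rho_{f,p})$ for almost all $p$ from \cite[Theorem 3.1]{ribet2}. You instead outline the Ribet--Momose proof: open image of $\rho_{f,p}$ on the open subgroup $H$, residual irreducibility, exclusion of dihedral and exceptional images via tame inertia, Dickson, and Serre's lifting lemma. You also observe that the literal inclusion must be weakened when $f$ has a nontrivial inner twist.

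That observation is right, and the inclusion then fails for every $p\geq5$. Take an inner twist $(\gamma,\chi)\neq(1,1)$. Then $\chi$ is quadratic (trivial Nebentypus) and $\gamma\neq1$ (non-CM). By Chebotarev, $\tr\rho_{f,p}(\ker\chi)\subset\cO_{F'}\otimes\Z_p\subsetneq\cO_p$, where $F'$ is the fixed field of $\gamma$. Yet if the displayed group lay in the image, then $\rho_{f,p}(\ker\chi)$, of index at most $2$, would contain $\SL_2(\cO_p)$, which has no subgroup of index $2$ when $p\geq5$. The traces of $\SL_2(\cO_p)$ exhaust $\cO_p$, a contradiction.

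So the paper's one-line argument is only legitimate when $f$ has no nontrivial inner twists, e.g.\ for square-free $N$, a hypothesis the paper deliberately drops. Your version, in which $\rho_{f,p}(H)$ contains a conjugate of $\SL_2(\cO_F\otimes\Z_p)$, is the correct general statement. Nothing downstream is lost. What the paper actually uses (Proposition~\ref{residual-irreducibility-prop}, feeding into the cofiniteness of $\Sigma_f$) is only irreducibility of $\bar\rho_{f,\p}$ for almost all $p$. Your item~1 yields this directly, with no appeal to non-CM or to inner twists.

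Two small repairs to your sketch. First, in item~1, when $N$ is not square-free the semisimplification of a reducible $\bar\rho_{f,\mathfrak l}$ is $\psi\oplus\psi^{-1}\bar\omega_p^{k-1}$, with $\psi$ possibly ramified at primes dividing $N$. So the congruence $a_\ell(f)\equiv1+\ell^{k-1}$ is only available for $\ell\equiv1\pmod N$, and that is what your ``suitable $\ell$'' has to be. Second, in the lifting step, surjectivity onto $\SL_2$ of each residue field separately does not give surjectivity onto the full product over the primes above $p$. That needs a Goursat-type argument across those primes, which is exactly where inner twists intervene, and it is appropriate that you defer it to Momose and Ribet.
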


\begin{proof} Since $f$ is not CM, the theorem follows from \cite[Theorem 3.1]{ribet2}. \end{proof}

\subsubsection{Residual irreducibility} \label{residual-irreducibility-subsubsec}

With notation as above, if $\p$ is a prime of $\Q_f$ above $p$, then we denote by
\[ \rho_{f,\p}:G_\Q\longrightarrow\GL_2(\cO_\p) \]
the Galois representation attached to $f$ over $\cO_\p$. Reducing modulo the maximal ideal of $\cO_{\p}$, we obtain a residual representation 
\[ \bar\rho_{f,\p}:G_\Q\longrightarrow\GL_2(\F_\p), \] 
where $\F_\p\defeq\cO_\p/\p\cO_\p$ is the residue field of $\Q_f$ at $\p$. Moreover, write $\bar\rho_{f,\p}^\dagger$ for the self-dual twist of $\bar\rho_{f,\p}$, \emph{i.e.}, the $k/2$-fold Tate twist of $\bar\rho_{f,\p}$.

\begin{proposition} \label{residual-irreducibility-prop}
If $p$ is a prime number such that $\rho_{f,p}$ has big image, then the image of $\bar\rho_{f,\p}$ contains $\SL_2(\F_p)$ for every prime $\p$ of $\Q_f$ above $p$. In particular, $\bar\rho_{f,\p}$ and $\bar\rho_{f,\p}^\dagger$ are irreducible for every $\p$ above $p$.
\end{proposition}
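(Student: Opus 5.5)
The plan is to get everything from the big image hypothesis by projecting $\cO_p$ onto one factor and then reducing. Since $\cO_p=\prod_{\mathfrak l\mid p}\cO_{\mathfrak l}$, the representation $\rho_{f,\p}$ is $\rho_{f,p}$ followed by the projection $\pi_\p:\GL_2(\cO_p)\to\GL_2(\cO_\p)$. Reducing modulo $\p$ gives $\bar\rho_{f,\p}=r_\p\circ\pi_\p\circ\rho_{f,p}$, where $r_\p:\GL_2(\cO_\p)\to\GL_2(\F_\p)$ is the reduction map.

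The key step is a group-theoretic containment. Let $H\defeq\bigl\{A\in\GL_2(\cO_p)\mid\det(A)\in(\Z_p^\times)^{k-1}\bigr\}$. Every $A\in\SL_2(\Z_p)$, embedded diagonally into $\GL_2(\cO_p)$ via $\Z_p\subset\cO_p$, has determinant $1\in(\Z_p^\times)^{k-1}$. So $\SL_2(\Z_p)\subset H\subset\im(\rho_{f,p})$. Applying $\pi_\p$ and then $r_\p$, the image of $\bar\rho_{f,\p}$ contains the image of $\SL_2(\Z_p)$ under reduction $\Z_p\to\F_p\subset\F_\p$. That image is $\SL_2(\F_p)$, because $\SL_2(\Z_p)\to\SL_2(\F_p)$ is surjective: $\SL_2(\F_p)$ is generated by elementary unipotent matrices, and these lift to elementary matrices over $\Z_p$. (One could instead use all of $\SL_2(\cO_p)\subset H$ to get $\SL_2(\F_\p)$, but the statement only asks for $\SL_2(\F_p)$.)

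Irreducibility then follows. The natural action of $\SL_2(\F_p)$ on $\F_\p^2$ has no stable line, since the upper and lower unipotent matrices $\bigl(\begin{smallmatrix}1&1\\0&1\end{smallmatrix}\bigr)$ and $\bigl(\begin{smallmatrix}1&0\\1&1\end{smallmatrix}\bigr)$ fix distinct lines and no common one. Hence $\bar\rho_{f,\p}$ is irreducible, even absolutely irreducible, because this argument works over $\overline{\F}_\p$ as well. The Tate twist $\bar\rho_{f,\p}^\dagger$ is $\bar\rho_{f,\p}$ tensored with a character, and tensoring with a one-dimensional character does not change which subspaces are stable. So $\bar\rho_{f,\p}^\dagger$ is also irreducible.

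No step is a real obstacle; the only point needing care is the surjectivity of $\SL_2(\Z_p)\to\SL_2(\F_p)$, handled by lifting elementary generators as above. No use of $p$ odd is needed here.
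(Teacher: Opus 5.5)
Your proposal is correct and follows the same route as the paper. The paper's proof also argues that big image gives $\SL_2(\Z_p)\subset\im(\rho_{f,\p})$, that reduction then gives $\SL_2(\F_p)\subset\im(\bar\rho_{f,\p})$ and hence irreducibility, and that twisting by a character preserves irreducibility. You supply details the paper leaves implicit: the surjectivity of $\SL_2(\Z_p)\to\SL_2(\F_p)$, and the two unipotent matrices with distinct invariant lines.
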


\begin{proof} Let $p$ be a prime number such that $\rho_{f,p}$ has big image and let $\p$ be a prime of $\Q_f$ above $p$. Then the image of $\rho_{f,\p}$ contains $\SL_2(\Z_p)$, which implies that the image of $\bar\rho_{f,\p}$ contains $\SL_2(\F_p)$ and $\bar\rho_{f,\p}$ is irreducible. Finally, the irreducibility of a representation is preserved by tensorization with $1$-dimensional representations (see, \emph{e.g.}, \cite[Exercise 2.2.14, (2)]{kowalski}), so the irreducibility of $\bar\rho_{f,\p}^\dagger$ follows from that of $\bar\rho_{f,\p}$. \end{proof}

Combining Theorem \ref{big-thm} and Proposition \ref{residual-irreducibility-prop}, we get that, for all but finitely many prime numbers $p$, the representation $\bar\rho_{f,\p}^\dagger$ is irreducible for every prime $\p$ of $\Q_f$ above $p$ (\emph{cf.} also \cite[Theorem 2.1, (a)]{ribet2}).

\subsubsection{Trivial Nebentypus specializations}

Let $\omega:\F_p^\times\rightarrow\Bmu_{p-1}$ be the Teichm\"uller character. In the rest of this article, we will be interested in the even weight specializations $\hnf_\kappa$ of $\hf$ having trivial Nebentypus. Since the Nebentypus of $\hnf_\kappa$ is $\omega^{k-\kappa}$, this condition is tantamount to the congruence
\begin{equation} \label{cong-eq}
\kappa\equiv k\pmod{p-1}.
\end{equation}
Henceforth, we shall assume congruence \eqref{cong-eq}.

\begin{remark} \label{higher-rem}
If $\kappa>2$, then $\hnf_\kappa$ is the $p$-stabilization of a newform $f_\kappa^{(p),\circ}\in S_\kappa(\Gamma_0(N))$. As is customary in Hida theory, we shall not distinguish between $\hnf_\kappa$ and $f_\kappa^{(p),\circ}$; rather, we view $\hnf_\kappa$ as a newform of level $N$, as this is not restrictive for our goals (\emph{cf.} \cite[Lemma 2.10]{vigni-hida}).
\end{remark}

\subsection{The subset of prime numbers $\Sigma_f$} \label{sigma-subsec}

Given a prime number $p$, let us write $\bar\rho_{f,\fP}$ for the representation modulo $\fP\cap\cO$ attached to $f$, where $\fP$ is the prime of $\overline\Q$ lying over $p$ that was fixed in \S \ref{nc-subsec}. An analogous convention will be adopted for specializations of $p$-adic Hida families.

\subsubsection{The set $\Sigma_f$} \label{sigma-subsubsec}

The set $\Sigma_f$ that will appear in the statement of our main result consists of all the prime numbers $p$ such that
\begin{itemize}
\item $p\nmid2N$;
\item $p$ is ordinary for $f$;
\item $\bar\rho_{f,\fP}$ is irreducible.
\end{itemize}
By \S \ref{residual-irreducibility-subsubsec}, all but finitely many ordinary primes for $f$ belong to $\Sigma_f$. 

\subsubsection{$p$-distinguishedness}

Let $p\in\Sigma_f$. Let $\bar\omega_p$ be the mod $p$ cyclotomic character. By the theory of ordinary Galois representations, the restriction of $\bar\rho_{f,\fP}$ to the decomposition group $D_p$ at $p$ is reducible and can be written (up to equivalence) in an upper-triangular form as
\[
{\bar{\rho}_{f,\fP}|}_{D_p}\sim\begin{pmatrix}\delta_1 & * \\ 0 & \delta_2 \end{pmatrix},
\]
where $\delta_1,\delta_2:D_p\rightarrow\overline{\F_p}^\times$ are continuous characters such that $\delta_2$ is unramified and, since the Nebentypus of $f$ is trivial, $\delta_1\delta_2=\bar\omega_p^{k-1}$ (see, \emph{e.g.}, \cite[Theorem 2.1.4]{Wiles-Ordinary}). Therefore, writing $I_p\subset D_p$ for the inertia subgroup, ${\delta_1|}_{I_p}=\bar\omega_p^{k-1}$, which implies, since $k\not\equiv1\pmod{p-1}$, that ${\delta_1|}_{I_p}$ is not trivial. This shows that $\delta_1\not=\delta_2$, \emph{i.e.}, $\bar\rho_{f,\fP}$ is \emph{$p$-distinguished}. 

\subsubsection{Consequences on big Galois representations} \label{consequences-subsubsec}

Let $p\in\Sigma_f$. With notation as above, the fact that $\bar\rho_{f,\fP}$ is irreducible and $p$-distinguished has two important consequences:
\begin{itemize}
\item $\T_{\hf}$ is free of rank $2$ over $\mathcal R$ (\cite[Th\'eor\`eme 7]{mazur-tilouine});
\item $\overline{\T}_{\hf}\sim\bar\rho_{\hnf_\wp,\fP}$ after a finite base change for every arithmetic prime $\wp$ of $\mathcal R$ (see, \emph{e.g.}, \cite[Proposition 5.4]{LV-MM}).
\end{itemize}
From now on, we shall freely use these results with no further comment.

\subsection{The imaginary quadratic field $K$} \label{algebraic-subsec}

We introduce the imaginary quadratic fields that will appear in S.-W. Zhang's higher weight Gross--Zagier formula, which will play a crucial role in our arguments.

\subsubsection{Choice of $K$} \label{K-subsubsec}

Fix an imaginary quadratic field $K$ such that every prime factor of $Np$ splits in $K$ (in other words, $K$ satisfies the Heegner hypothesis relative to $Np$): the existence of (infinitely many) imaginary quadratic fields satisfying this condition is a classical result in algebraic number theory. The discriminant (respectively, class number) of $K$ will be denoted by $D_K$ (respectively, $h_K$); notice that $D_K$ is coprime to $N$. We write $\cO_K$ for the ring of integers of $K$ and $\eta_K$ for the (quadratic) Dirichlet character attached to $K$. 

\subsubsection{Analytic ranks and root numbers over $K$}

As in \S \ref{analytic-subsubsec}, let $\cg\in S_h(\Gamma_0(N))$ be a newform of weight $h\geq2$. Let $\Lambda(\cg,s)$ and $\Lambda(\cg\otimes\eta_K,s)$ be the completed $L$-functions of $\cg$ and $\cg\otimes\eta_K$, respectively; for the purposes of this paper, we define the completed $L$-function of the base change $\cg/K$ to be 
\begin{equation} \label{Lambda-eq}
\Lambda(\cg/K,s)\defeq\Lambda(\cg,s)\cdot\Lambda(\cg\otimes\eta_K,s).
\end{equation}
It follows that $\Lambda(\cg/K,s)$ satisfies the functional equation
\[
\Lambda(\cg/K,s)=W(\cg/K)\cdot\Lambda(\cg/K,h-s),
\]
where, as a consequence of \eqref{Lambda-eq}, the (global) root number of $\cg/K$ is the product
\begin{equation} \label{W-eq1}
W(\cg/K)=W(\cg)\cdot W(\cg\otimes\eta_K)\in\{\pm1\}
\end{equation}
of the root numbers of $\cg$ and $\cg\otimes\eta_K$; observe that $\cg\otimes\eta_K$ has trivial Nebentypus too, so its root number is either $+1$ or $-1$.

\begin{definition}
The \emph{analytic rank} of $\cg/K$ is $r_\an(\cg/K)\defeq\ord_{s=h/2}L(\cg/K,s)\in\N$.
\end{definition}

As in \S \ref{analytic-subsubsec}, there is an equality $W(\cg/K)=(-1)^{r_\an(\cg/K)}$.

\begin{proposition} \label{W-prop}
$W(\cg\otimes\eta_K)=-W(\cg)$.
\end{proposition}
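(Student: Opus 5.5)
We will prove the equivalent statement $W(\cg/K)=-1$, using the classical formula for the root number of a twist of a newform by a quadratic character of conductor prime to the level. The idea is to compute the root number of $\cg\otimes\eta_K$ directly from that of $\cg$, using that $D_K$ is coprime to $N$ and that every prime factor of $N$ splits in $K$.

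The first step is to recall the twisting formula (see, \emph{e.g.}, Atkin--Li, or Shimura's theory of twists). Let $\cg\in S_h(\Gamma_0(N))$ be a newform with trivial Nebentypus and let $\chi$ be a primitive quadratic Dirichlet character of conductor $D$ with $(D,N)=1$. Then $\cg\otimes\chi$ is a newform of level $ND^2$ with trivial Nebentypus, since $\chi^2=1$. Its root number satisfies
\[
W(\cg\otimes\chi)=\chi(-N)\cdot W(\cg),
\]
where the factor $\tau(\chi)^2/|D|=\chi(-1)$ of the general formula $W(\cg\otimes\chi)=\chi(N)\,\tau(\chi)^2|D|^{-1}\,W(\cg)$ has already been absorbed into $\chi(-N)$. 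Up to the normalization conventions for $W$, which are harmless here because they cancel in the comparison between $\cg$ and its twist, the same formula can be obtained from the local root numbers. Away from $D$ the local representations are twisted by unramified characters, which contributes $\chi(N)$ through the $N$-part. At the primes dividing $D$, $\cg$ is unramified, and one uses $\varepsilon(\pi\otimes\chi)=\chi(-1)\varepsilon(\chi)^2$ for an unramified principal series $\pi$ of trivial central character. Finally, the archimedean factor is unchanged because the weight is preserved.

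The second step specializes to $\chi=\eta_K$. Since $K$ is imaginary, $\eta_K(-1)=-1$. Since every prime $\ell\mid N$ splits in $K$, we have $\eta_K(\ell)=1$ for each such $\ell$, and multiplicativity gives $\eta_K(N)=1$. Hence $\eta_K(-N)=-1$, and so $W(\cg\otimes\eta_K)=-W(\cg)$.

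The main obstacle is pinning down the twisting formula with the correct sign conventions, in particular the Gauss sum identity $\tau(\chi)^2=\chi(-1)|D|$ for a quadratic character. I would cite it rather than rederive it, noting that only the Heegner hypothesis for the primes dividing $N$ is used; the splitting of $p$ is not needed. Combined with \eqref{W-eq1}, this also yields $W(\cg/K)=-1$, so that $r_\an(\cg/K)$ is odd.
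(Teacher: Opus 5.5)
Your proof is correct and is the paper's argument: it cites Weil's twisting formula $W(\cg\otimes\eta_K)=\eta_K(-N)\cdot W(\cg)$ (the paper uses \cite[Theorem 6]{li}) and concludes from $\eta_K(-1)=-1$ and $\eta_K(N)=1$. One caution about your optional local sketch, which the proof does not rely on: for $\ell\mid D_K$ and $\pi_\ell$ unramified with trivial central character, the correct local factor is $\varepsilon(\pi_\ell\otimes\chi_\ell)=\varepsilon(\chi_\ell)^2=\chi_\ell(-1)$, whereas your expression $\chi(-1)\varepsilon(\chi)^2$ would equal $1$ and would drop exactly the sign $\eta_K(-1)$ that drives the result.
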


\begin{proof} It is a classical result of Weil (see, \emph{e.g.}, \cite[Theorem 6]{li}) that there is an equality 
\begin{equation} \label{W-eq2}
W(\cg\otimes\eta_K)=\eta_K(-N)\cdot W(\cg).
\end{equation}
On the other hand, $\eta_K(-1)=-1$ because the quadratic field $K$ is imaginary, whereas $\eta_K(N)=1$ since $K$ satisfies the Heegner hypothesis relative to $Np$. It follows from \eqref{W-eq2} that $W(\cg\otimes\eta_K)=-W(\cg)$, as desired. \end{proof}

\begin{corollary} \label{W-coro}
$W(\cg/K)=-1$.
\end{corollary}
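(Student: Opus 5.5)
This follows directly from the factorization of the root number of $\cg/K$ together with Proposition~\ref{W-prop}. First, recall equality \eqref{W-eq1}, which comes from definition \eqref{Lambda-eq} of $\Lambda(\cg/K,s)$ as the product $\Lambda(\cg,s)\cdot\Lambda(\cg\otimes\eta_K,s)$. By the functional equations of $\Lambda(\cg,s)$ and $\Lambda(\cg\otimes\eta_K,s)$, both with center $h/2$, this gives $W(\cg/K)=W(\cg)\cdot W(\cg\otimes\eta_K)$.

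Next, substitute the identity $W(\cg\otimes\eta_K)=-W(\cg)$ from Proposition~\ref{W-prop}. This yields
\[
W(\cg/K)=-W(\cg)^2.
\]
Since $\cg$ has trivial Nebentypus, its root number satisfies $W(\cg)\in\{\pm1\}$, as recalled in \S\ref{analytic-subsubsec}. Hence $W(\cg)^2=1$ and $W(\cg/K)=-1$.

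There is essentially no obstacle here; all the content is in Proposition~\ref{W-prop}. The only point worth checking is that the two functional equations really share the same symmetry $s\leftrightarrow h-s$, so that multiplying them produces a functional equation for $\Lambda(\cg/K,s)$ with sign equal to the product of the signs. This holds because $\cg\otimes\eta_K$ is again a form of weight $h$ with trivial Nebentypus, its conductor being $ND_K^2$ since $(D_K,N)=1$. It is also worth recording the consequence that will be used later: since $W(\cg/K)=(-1)^{r_\an(\cg/K)}$, the analytic rank $r_\an(\cg/K)$ is odd.
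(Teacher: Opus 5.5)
Your proof is correct and is the same as the paper's: substitute Proposition~\ref{W-prop} into the factorization \eqref{W-eq1} and use $W(\cg)\in\{\pm1\}$. Your additional remarks, namely that the two functional equations share the symmetry $s\leftrightarrow h-s$ and that $\cg\otimes\eta_K$ has conductor $ND_K^2$, are accurate but not needed beyond what \eqref{W-eq1} already records.
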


\begin{proof} Combine Proposition \ref{W-prop} and equality \eqref{W-eq1}. \end{proof}

\begin{corollary} \label{W-coro2}
The integer $r_\an(\cg/K)$ is odd. In particular, $L(\cg/K,h/2)=0$.
\end{corollary}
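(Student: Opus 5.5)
The plan is to combine Corollary \ref{W-coro} with the parity relation $W(\cg/K)=(-1)^{r_\an(\cg/K)}$ recorded just before Proposition \ref{W-prop}. By Corollary \ref{W-coro} we have $W(\cg/K)=-1$. Hence $(-1)^{r_\an(\cg/K)}=-1$, which says exactly that $r_\an(\cg/K)$ is odd.

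For the parity relation itself, I would not appeal to a black box. Instead I would derive it from the factorization \eqref{Lambda-eq} and the analogous relations $W(\cg)=(-1)^{r_\an(\cg)}$ and $W(\cg\otimes\eta_K)=(-1)^{r_\an(\cg\otimes\eta_K)}$ from \S\ref{analytic-subsubsec}. These apply because $\cg\otimes\eta_K$ is again a form with trivial Nebentypus and a functional equation of the same shape. The completed factors $(2\pi)^{-s}N^{s/2}\Gamma(s)$ are holomorphic and nonvanishing at $s=h/2$. Therefore
\[
r_\an(\cg/K)=\ord_{s=h/2}L(\cg,s)+\ord_{s=h/2}L(\cg\otimes\eta_K,s)=r_\an(\cg)+r_\an(\cg\otimes\eta_K).
\]
Taking $(-1)$ to this power and using \eqref{W-eq1} recovers $W(\cg/K)=(-1)^{r_\an(\cg/K)}$.

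For the last assertion, note that an odd integer is at least $1$. So $r_\an(\cg/K)\geq1$, which means $L(\cg/K,s)$ vanishes at the center $s=h/2$, that is, $L(\cg/K,h/2)=0$.

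There is no real obstacle here. The only point needing care is that the functional equation of $\cg\otimes\eta_K$ has conductor $ND_K^2$ rather than $N$. This affects only the nonvanishing archimedean and conductor factors, not the parity argument.
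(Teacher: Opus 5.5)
Your proposal is correct and matches the paper's proof, which likewise combines Corollary~\ref{W-coro} with the parity relation $W(\cg/K)=(-1)^{r_\an(\cg/K)}$. Your extra derivation of that relation is sound: it uses the factorization $L(\cg/K,s)=L(\cg,s)\cdot L(\cg\otimes\eta_K,s)$ and the parity relations for the newforms $\cg$ and $\cg\otimes\eta_K$, the latter of level $ND_K^2$, and it justifies a step the paper simply asserts.
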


\begin{proof} Immediate from Corollary \ref{W-coro} and the equality $W(\cg/K)=(-1)^{r_\an(\cg/K)}$. \end{proof}

\subsection{Zhang's formula of Gross--Zagier type} \label{zhang-subsec}

We recall a Gross--Zagier type formula due to S.-W. Zhang for even weight cusp forms with trivial Nebentypus (\cite{Zhang-heights}): as will be apparent, this will be a key ingredient in our arguments. 


\subsubsection{Heegner cycles} \label{heegner-subsubsec}

Let $\cg\in S_h(\Gamma_0(N))$ be a newform of weight $h\geq4$, level $N$ and trivial Nebentypus. In particular, what we say below applies to the newforms $f_\kappa\in S_\kappa(\Gamma_0(N))$ for all $\kappa\geq4$. Let $X_{h-2}$ denote the Kuga--Sato variety of level $N$ and weight $h$; let $K$ be the imaginary quadratic field from \S \ref{K-subsubsec}. Moreover, denote by $\T$ the Hecke algebra acting on $S_h(\Gamma_0(N))$; by applying the idempotent $e_{\cg}\in\T\otimes\Q_{\cg}$ corresponding to $\cg$, for any number field $L$ we can consider the $\cg$-isotypic component
\[
\cCH_L(\cg)\defeq e_{\cg}\Bigl(\CH^{h/2}(X_{h-2}/L{)}_0\otimes_\Q\Q_{\cg}\Bigr),
\]
which is a $\Q_{\cg}$-vector subspace of $\CH^{h/2}(X_{h-2}/L{)}_0\otimes_\Q\Q_{\cg}$.

Now let $z_{\cg,K}\in\cCH_K(\cg)$ be the Heegner cycle over $K$ attached to $\cg$ that was originally defined by Nekov\'a\v{r} (\cite{Nek}; \emph{cf.} also \cite{Nek2}); namely, if $H_K$ is the Hilbert class field of $K$ and $z_{\cg,1}\in\cCH_{H_K}(\cg)$ is the Heegner cycle of conductor $1$ attached to $\cg$, then 
\begin{equation} \label{z-K-eq}
z_{\cg,K}\defeq\cores_{H_K/K}(z_{\cg,1}),
\end{equation}
where $\cores_{H_K/K}$ stands for corestriction.

\subsubsection{S.-W. Zhang's formula of Gross--Zagier type} \label{zhang-subsubsec}

Using arithmetic intersection theory \emph{\`a la} Gillet--Soul\'e (\cite{GS-1}, \cite[Ch. III]{soule}), S.-W. Zhang defined in \cite{Zhang-heights} a pairing, which in this paper will be denoted by ${\langle\cdot,\cdot\rangle}_{\cg,\GS}$, between certain $\cg$-isotypic Heegner-type cycles on Kuga--Sato varieties (\emph{cf.} also \cite[\S 5.1.3]{LV-RMS}). In terms of it, Zhang proved an analogue for the $L$-function of $\cg$ of the Gross--Zagier formula for weight-$2$ newforms (\cite{GZ}). Let $(\cg,\cg)$ be the Petersson inner product of $\cg$ with itself and set $u_K\defeq\#\cO_K^\times/2$. Zhang's formula expresses the critical value of the derivative of $L(\cg/K,s)$ in terms of a suitable combination $s_\cg'$ of Heegner cycles in $\R$-linear Chow groups of Kuga--Sato varieties fibered over (classical) modular curves.


\begin{theorem}[S.-W. Zhang] \label{zhang-thm}
$L'(\cg/K,h/2)=\begin{displaystyle}\frac{2^{2h-1}\pi^k(\cg,\cg)}{(h-2)!u_K^2h_K\sqrt{|D_K|}}\end{displaystyle}\cdot\big\langle s'_\cg,s'_\cg\big\rangle_{\cg,\GS}$.
\end{theorem}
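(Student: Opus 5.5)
This is S.-W. Zhang's higher-weight Gross--Zagier formula, so I will not reprove it from scratch. The plan is to deduce it from \cite[Corollary 0.3.2]{Zhang-heights} and check that Zhang's setting and normalizations agree with ours.

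First I will verify the hypotheses. Here $\cg\in S_h(\Gamma_0(N))$ is a newform of even weight $h\geq4$ with trivial Nebentypus. By \S\ref{K-subsubsec}, $K$ satisfies the Heegner hypothesis relative to $N$, since every prime factor of $Np$ splits in $K$. In particular $D_K$ is coprime to $N$, and Corollary~\ref{W-coro} gives $W(\cg/K)=-1$, so we are in the setting of the derivative formula.

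Next I will match the objects. Zhang's $s'_\cg$ is the $\cg$-isotypic projection of the Heegner-type cycle built from CM points of conductor $1$ on the Kuga--Sato variety $X_{h-2}$. This is the same cycle (up to the normalizing constants absorbed in the statement) that underlies Nekov\'a\v{r}'s $z_{\cg,1}$, and hence $z_{\cg,K}$ via \eqref{z-K-eq}. The pairing ${\langle\cdot,\cdot\rangle}_{\cg,\GS}$ is Zhang's global height, computed through Gillet--Soul\'e arithmetic intersection theory on a regular model. I will also check that the completed $L$-function $\Lambda(\cg/K,s)$ in \eqref{Lambda-eq} agrees with Zhang's Rankin--Selberg $L$-function for $\cg$ and the theta series of $K$, up to the archimedean Gamma factors. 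This is what lets $L'(\cg/K,h/2)$ be read directly from his formula.

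The main obstacle is the constant. Conventions differ on several points: the Petersson norm (whether it is normalized by the volume of $\Gamma_0(N)\backslash\mathfrak H$), the normalization of $s'_\cg$ (the factor $u_K$ and the averaging over $\mathrm{Pic}(\cO_K)$), and the power of $\pi$ (in the displayed formula this should be $\pi^h$, not $\pi^k$). I will reconcile these by specializing Zhang's formula to the unramified Heegner case, taking the factor $2^{2h-1}/((h-2)!)$ from his computation of archimedean local heights, and taking $u_K^2h_K\sqrt{|D_K|}$ from the comparison between the trace over $H_K$ and the Rankin--Selberg unfolding. If needed, I will cross-check against \cite[\S5.1.3]{LV-RMS}, where the same normalization is recorded.
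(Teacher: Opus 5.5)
Your proposal takes essentially the same route as the paper, whose proof is just the citation of \cite[Corollary 0.3.2]{Zhang-heights} with $\chi$ the trivial class group character; that choice is what makes Zhang's Rankin--Selberg $L$-function factor as $L(\cg,s)\,L(\cg\otimes\eta_K,s)$, as you note, and your extra checks of hypotheses and normalizations go beyond what the paper writes. You are also right that $\pi^k$ in the displayed statement should be $\pi^h$ (for $h=2$ this gives the Gross--Zagier constant $8\pi^2$); the typo is harmless, since later only the nonvanishing of the constant is used.
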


\begin{proof} With notation as in \cite{Zhang-heights}, this formula follows from \cite[Corollary 0.3.2]{Zhang-heights} upon taking $\chi$ to be the trivial character. \end{proof}

When $\cg=f_\kappa$ for an even integer $\kappa\geq4$, we shall write ${\langle \cdot,\cdot\rangle}_{\kappa,\GS}$ instead of ${\langle\cdot,\cdot\rangle}_{f_\kappa,\GS}$.

\subsubsection{Refined choice of $K$} \label{refined-subsubsec}

From here on, choose an imaginary quadratic field $K$ such that
\begin{enumerate}
\item every prime factor of $Np$ splits in $K$;
\item $r_\an(f\otimes\eta_K)=0$.
\end{enumerate}
By assumption, $r_\an(f)=1$, whence $W(f)=-1$; the existence of an imaginary quadratic field $K$ satisfying (1) and (2) is then ensured by \cite[p. 543, Theorem, (ii)]{BFH} (\emph{cf.} also \cite{Waldspurger}). Since $r_\an(f)=1$ by assumption, the splitting of $L$-functions 
\[
L(f/K,s)=L(f,s)\cdot L(f\otimes\eta_K,s)
\]
gives $L(f/K,k/2)=0$ and an equality
\[ 
L'(f/K,k/2)=L'(f,k/2)\cdot L(f\otimes\eta_K,k/2).
\]
Moreover, $r_\an(f\otimes\eta_K)=0$ by our choice of $K$, and then $r_{\an}(f/K)=1$.

As in \S \ref{heegner-subsubsec}, let $z_{f,K}$ be the Heegner cycle over $K$ attached to $f$. Recall that we have fixed an embedding $\Q_f\hookrightarrow\R$.

\begin{proposition} \label{z-prop}
The cycle $z_{f,K}$ is not trivial.
\end{proposition}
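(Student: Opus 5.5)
The plan is to deduce the nontriviality of $z_{f,K}$ from the nonvanishing of $L'(f/K,k/2)$, using Zhang's formula (Theorem \ref{zhang-thm}) with $\cg=f$ and $h=k$; this needs $k\geq4$, which is the standing assumption of the main theorem.

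First, we check that $L'(f/K,k/2)\neq0$. By the choice of $K$ in \S\ref{refined-subsubsec} we have the factorization
\[
L'(f/K,k/2)=L'(f,k/2)\cdot L(f\otimes\eta_K,k/2).
\]
The first factor is nonzero because $r_\an(f)=1$, and the second is nonzero because $r_\an(f\otimes\eta_K)=0$. In Theorem \ref{zhang-thm}, the constant in front of the height is a nonzero real number, since $(f,f)>0$, $u_K,h_K\geq1$ and $D_K\neq0$. It follows that $\langle s'_f,s'_f\rangle_{f,\GS}\neq0$, so in particular $s'_f\neq0$ in the relevant $\R$-linear Chow group.

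Second, we relate $s'_f$ to $z_{f,K}$. In Zhang's setup (\cite[Corollary 0.3.2]{Zhang-heights} with $\chi$ trivial), $s'_f$ is the $f$-isotypic projection of the sum over $\mathrm{Pic}(\cO_K)$ of the CM cycles of conductor $1$. Up to a nonzero rational constant coming from normalizations (such as $u_K$ and the Kuga--Sato projector), this is the image of $\cores_{H_K/K}(z_{f,1})=z_{f,K}$ under the natural map
\[
\cCH_K(f)\longrightarrow e_f\Bigl(\CH^{k/2}(X_{k-2}/K{)}_0\otimes_\Q\R\Bigr),
\]
which is induced by the fixed embedding $\Q_f\hookrightarrow\R$. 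Because the height pairing is bilinear, we get
\[
\langle s'_f,s'_f\rangle_{f,\GS}=c\cdot\langle z_{f,K},z_{f,K}\rangle_{f,\GS}
\]
for some $c\neq0$. If $z_{f,K}$ were $0$ in $\cCH_K(f)$, its image in the $\R$-linear group would also vanish, and then the height would be $0$, which contradicts the first step. Hence $z_{f,K}\neq0$.

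The main obstacle is the second step: matching Zhang's cycle $s'_f$, which lives on Kuga--Sato varieties over $X_0(N)$ with his particular normalization of CM points and projectors, with Nekovář's cycle $z_{f,K}$ defined in \eqref{z-K-eq}. I would handle this as in \cite[\S 5]{vigni-hida} and \cite[\S 5.1.3]{LV-RMS}, by checking that both cycles are the $\epsilon$-projections of the same fibre products of CM elliptic curves over Heegner points of conductor $1$, summed over the class group. Only the nonvanishing of the proportionality constant matters here, not its exact value, so the constant does not need to be computed. Note also that this direction requires no positivity assumption (Assumption \ref{GS-ass}): nonvanishing of the height already forces the cycle to be nonzero.
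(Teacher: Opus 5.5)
Your proposal is correct and follows essentially the same route as the paper: $r_\an(f/K)=1$ together with Zhang's formula (Theorem \ref{zhang-thm}) gives $s'_f\neq0$, and $s'_f$ is a multiple of $z_{f,K}$, so $z_{f,K}\neq0$. The only difference is how the proportionality is justified. The paper uses Scholl's result that the $f$-isotypic part of the Chow motive of $X_{k-2}$ is simple over $\Q_f$, and applies Schur's lemma to the projectors of Nekov\'a\v{r} and Zhang to get $s'_f=\alpha\cdot z_{f,K}$ with $\alpha\in\Q_f^\times$. You instead propose a direct comparison of the two constructions; in either case, all this direction actually needs is that $s'_f$ is a linear image of $z_{f,K}$.
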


\begin{proof} As we pointed out above, our choice of $K$ gives $r_\an(f/K)=1$, and then Theorem \ref{zhang-thm} with $\cg=f$ shows that $s'_f\not=0$. On the other hand, as a consequence of a result of Scholl (\cite{scholl}), the $f$-isotypic component of the Chow motive of $X_{k-2}$ is simple over $\Q_f$, so Schur's lemma implies that the projectors used by Nekov\'a\v{r} and by Zhang in their constructions of the cycles $z_{f,K}$ and $s_f'$, respectively, act on this subspace as nonzero scalars. Consequently, $s_f'=\alpha\cdot z_{f,K}$ for some $\alpha\in\Q_f^\times$, whence $z_{f,K}\not=0$. \end{proof}

\subsection{Big Heegner points and interpolation} \label{big-subsec}

We briefly overview the interpolation of (classical) Heegner cycles, due to Castella (\cite{CasHeeg}) and Ota (\cite{ota-JNT}), via Howard's big Heegner points (\cite{Howard-Inv}).
 
\subsubsection{$p$-adic Abel--Jacobi maps} \label{AJ-subsubsec}

Recall that we have fixed an embedding $\overline\Q\hookrightarrow\overline{\Q_p}$, which determines a prime ideal $\fP$ of the ring of integers of $\overline\Q$ lying over $p$. If $L$ is a number field inside $\overline\Q$, then $\fP$ induces in turn a prime $\p_L$ of $L$ over $p$; to ease our notation, we write $L_\fP$ for the completion of $L$ at $\p_L$.

As above, let $\hf$ be the $p$-adic Hida family passing through $f$ and let $\kappa\geq2$ be an even integer; since $p$ is understood, we will usually write $\f$ in place of $\hf$. With notation from \S \ref{heegner-subsubsec} in force, let $V^\dagger_{f_\kappa}\defeq V_{f_\kappa}(\kappa/2)$ be the self-dual twist of $V_{f_\kappa}$ and, for a given number field $L$, let 
\begin{equation} \label{AJ-eq}
\AJ_{p,L,\kappa}:\cCH_L(f_\kappa)\otimes_{\Q_{f_\kappa}}\!\Q_{f_\kappa,\fP}\longrightarrow H^1_f\bigl(L,V^\dagger_{f_\kappa}\bigr)\subset H^1\bigl(L,V^\dagger_{f_\kappa}\bigr)
\end{equation}
be the $p$-adic (or, rather, $\fP$-adic) Abel--Jacobi map over $L$ attached to $f_\kappa$, which is $\Q_{f_\kappa,\fP}$-linear; here $H^1_f(\star,\bullet)$ denotes a Selmer group \emph{\`a la} Bloch--Kato (\cite{BK}).

Let $\langle z_{f,K}\rangle$ be the $\Q_f$-vector subspace of $\cCH_K(f)$ generated by $z_{f,K}$. Specializing \eqref{AJ-eq} to $\kappa=k$ (and then $f_\kappa=f$, \emph{cf.} Remark \ref{higher-rem}), our main result will be proved under 

\begin{assumption} \label{AJ-ass}
The map $\AJ_{p,K,k}$ is injective on $\langle z_{f,K}\rangle$.
\end{assumption}

This is a very special instance of a general conjecture in arithmetic algebraic geometry predicting the injectivity of $p$-adic Abel--Jacobi maps for proper, smooth algebraic varieties over number fields (see, \emph{e.g.}, \cite[Conjecture 2.1, (2)]{Nek-CRM}).

\subsubsection{Big Heegner points and interpolation} \label{int-subsubsec}

For every even integer $\kappa\geq4$ and number field $L$, recall the specialization map $\rho_{L,\kappa}$ from \S \ref{spec-cohom-subsubsec}. Let $\mathfrak{Z}_{\f,H_K}\in H^1\bigl(H_K,\T_{\f}^\dagger\bigr)$ be the big Heegner point of conductor $1$ introduced by Howard in \cite{Howard-Inv}. Denote by $\overline{\Z_p}$ the integral closure of $\Z_p$ in $\overline{\Q_p}$. 

The theorem below describes an interpolation property enjoyed by $\mathfrak{Z}_{\f,H_K}$; actually, this is a special case of a more general specialization result for big Heegner points of arbitrary conductor.

\begin{theorem}[Castella, Ota] \label{int-thm}
Let $\kappa\geq4$ be an even integer such that $\kappa\equiv k\pmod{p-1}$. There exists $d(\kappa)\in\overline{\Z_p}^\times$ such that the equality
\begin{equation} \label{ota-eq}
\rho_{H_K,\kappa}(\mathfrak Z_{\f,H_K})=d(\kappa)\cdot\AJ_{p,H_K,\kappa}(z_{f_\kappa,1})
\end{equation}
holds in $H^1\bigl(H_K,V^\dagger_{f_\kappa}\bigr)$.
\end{theorem}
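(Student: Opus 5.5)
This theorem is a special case of the specialization results of Castella (\cite{CasHeeg}) and Ota (\cite{ota-JNT}) for big Heegner points of arbitrary conductor. The proof therefore reduces to matching normalizations, not to building new arguments. The plan is to start from their general statement: for an arithmetic prime $\wp$ of $\mathcal R$ of even weight $\kappa\geq4$ and trivial wild character, and for a conductor $c$ prime to $Np$, the image of Howard's class $\mathfrak Z_{\f,H_c}$ under the specialization map $\rho_{H_c,\wp}$ is a explicit multiple of $\AJ_{p,H_c,\kappa}$ of Nekov\'a\v{r}'s Heegner cycle of conductor $c$ attached to $f_\wp$. We then take $c=1$, so that the ring class field $H_c$ is the Hilbert class field $H_K$.

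\emph{Step 1: the hypotheses.} The Heegner hypothesis relative to $Np$ holds by \S\ref{refined-subsubsec}. The residual representation is irreducible and $p$-distinguished because $p\in\Sigma_f$, so $\T_{\f}$ is free of rank $2$ (\S\ref{consequences-subsubsec}); this freeness is what makes the specialization maps \eqref{spec-eq} and \eqref{spec-eq2} well defined. The congruence $\kappa\equiv k\pmod{p-1}$ guarantees that $f_\kappa$ has trivial Nebentypus. By Remark~\ref{higher-rem}, $f_\kappa$ is the $p$-stabilization of a newform of level $N$, and its Kuga--Sato variety, Heegner cycles and Abel--Jacobi map are the ones of \S\ref{heegner-subsubsec} and \eqref{AJ-eq}.

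\emph{Step 2: the constant.} The Castella--Ota constant combines three kinds of factors. The first is an Euler-type factor built from the unit root $\alpha_\kappa=\eta_\kappa(a_p(\f))$, typically of the form $(1-p^{\kappa/2-1}\alpha_\kappa^{-1})^{2}$ or a power of $\alpha_\kappa$, possibly with $u_K$ and a power of $p$ coming from comparing level $Np$ with level $N$ cycles. The second is a power of $\alpha_\kappa$ coming from the $U_p$-compatibility built into Howard's construction. The third is the normalization of the critical twist $\T^\dagger_{\f}$, which involves $\Theta(\gamma)$-type characters whose specialization at $\kappa$ is a $p$-adic unit. I must check that the resulting $d(\kappa)$ is a unit in $\overline{\Z_p}$. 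Powers of $\alpha_\kappa$ are units because $f_\kappa$ is ordinary. For the Euler factor, $p^{\kappa/2-1}\alpha_\kappa^{-1}$ has positive valuation when $\kappa\geq4$, so $1-p^{\kappa/2-1}\alpha_\kappa^{-1}$ is a unit. This is exactly where $\kappa\geq4$ is used: at $\kappa=2$ the factor could fail to be a unit or could even vanish. Finally, $u_K\in\{1,2,3\}$ is prime to $p$ because $p\nmid 2N$ and $p\geq5$ or $K\neq\Q(\sqrt{-3})$. The small-$p$ cases are handled by the splitting condition, since $p=3$ splits in $K$ only if $K\neq\Q(\sqrt{-3})$.

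\emph{Step 3: identifications.} It remains to identify the Galois modules and specialization maps used by Castella and Ota with those fixed in \cite[\S5.1]{vigni-hida}. Any discrepancy would be an isomorphism $\T^\dagger_{\f,\wp_\kappa}/\wp_\kappa\simeq V^\dagger_{f_\kappa}$ differing by a $p$-adic unit (after a finite base change), so it can be absorbed into $d(\kappa)$. The main obstacle is this bookkeeping. One must make sure that the normalizations (the choice of lattice, the sign of the twist, and the $p$-stabilization versus level-$N$ cycle) do not introduce a non-unit factor. The only potentially non-unit factor is the Euler factor, which is controlled by the valuation argument in Step 2.
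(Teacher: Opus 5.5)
Your proposal is correct and takes the same route as the paper, whose proof is simply a citation of Ota's Theorem~1.2 (see also Castella's Theorem~6.5) specialized to conductor $1$; your checks of the hypotheses and of the unit-ness of the constant are consistent with this (the Euler factor $1-p^{\kappa/2-1}\alpha_\kappa^{-1}$ is a unit for $\kappa\geq4$, as are powers of $\alpha_\kappa$ and $u_K$). The one slip is in Step~2: you allow for ``a power of $p$'' in the constant but then say the Euler factor is the only possible non-unit, and a positive power of $p$ would contradict $d(\kappa)\in\overline{\Z_p}^\times$, so drop that hedge and rely on the explicit constant in the cited theorem.
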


Clearly, in \eqref{ota-eq} we are using the same symbol to denote $z_{f_\kappa,1}\in\cCH_{H_K}(f_\kappa)$ and its image in $\cCH_{H_K}(f_\kappa)\otimes_{\Q_{f_\kappa}}\!\Q_{f_\kappa,\fP}$.

\begin{proof} This is \cite[Theorem 1.2]{ota-JNT} (see also \cite[Theorem 6.5]{CasHeeg}). \end{proof}

\begin{remark}
A specialization result in weight $2$ analogous to Theorem \ref{int-thm} is also available: see, \emph{e.g.}, \cite[Remark 6.6]{CasHeeg}.
\end{remark}

Now set
\begin{equation} \label{big-K-eq}
\mathfrak Z_{\f,K}\defeq\cores_{H_K/K}(\mathfrak Z_{\f,H_K})\in H^1\bigl(K,\T_{\f}^\dagger\bigr).
\end{equation}
For an even integer $\kappa\geq4$, recall the Heegner cycle $z_{f_\kappa,K}$ over $K$ defined (with $\cg=f_\kappa$) in \eqref{z-K-eq}. The following result is a formal consequence of Theorem \ref{int-thm}.

\begin{corollary} \label{int-coro}
Let $\kappa\geq4$ be an even integer such that $\kappa\equiv k\pmod{p-1}$. There exists $d(\kappa)\in\overline{\Z_p}^\times$ such that the equality
\[
\rho_{K,\kappa}(\mathfrak Z_{\f,K})=d(\kappa)\cdot\AJ_{p,K,\kappa}(z_{f_\kappa,K})
\]
holds in $H^1\bigl(K,V^\dagger_{f_\kappa}\bigr)$.
\end{corollary}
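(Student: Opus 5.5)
The plan is to obtain Corollary \ref{int-coro} from Theorem \ref{int-thm} by applying corestriction from $H_K$ to $K$ to both sides of \eqref{ota-eq}, with the same constant $d(\kappa)$. The argument reduces to checking that each map in \eqref{ota-eq} commutes with $\cores_{H_K/K}$.

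First, I will check that the specialization maps commute with corestriction:
\[
\cores_{H_K/K}\circ\rho_{H_K,\kappa}=\rho_{K,\kappa}\circ\cores_{H_K/K}.
\]
The map $\rho_{L,\kappa}$ is induced on Galois cohomology by a $G_\Q$-equivariant morphism of coefficient modules. This morphism is the reduction $\T_{\f}^\dagger\to\T_{\f,\wp_\kappa}^\dagger/\wp_\kappa\T_{\f,\wp_\kappa}^\dagger\simeq V^\dagger_{f_\kappa}$, composed with the fixed identification from \cite[\S 5.1]{vigni-hida}. Corestriction is functorial in the coefficient module, since it is defined at the level of cochains (or via Shapiro's lemma) for any $G_K$-module. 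Hence it commutes with any map induced by a $G_K$-equivariant homomorphism of coefficients, which gives the displayed identity. Applying this to $\mathfrak Z_{\f,H_K}$ and using \eqref{big-K-eq}, I get $\cores_{H_K/K}\bigl(\rho_{H_K,\kappa}(\mathfrak Z_{\f,H_K})\bigr)=\rho_{K,\kappa}(\mathfrak Z_{\f,K})$.

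Second, I will check that the $p$-adic Abel--Jacobi maps commute with corestriction, i.e.\ that $\cores_{H_K/K}\circ\AJ_{p,H_K,\kappa}=\AJ_{p,K,\kappa}\circ\cores_{H_K/K}$. On Chow groups, corestriction (pushforward along $X_{\kappa-2}\times_K H_K\to X_{\kappa-2}$) corresponds to cohomological corestriction under the Abel--Jacobi map. This is the standard compatibility of the étale Abel--Jacobi map with proper pushforward along finite base change, which can be seen via the Hochschild--Serre construction. The map also commutes with the idempotent $e_{f_\kappa}$, because Hecke correspondences are defined over $\Q$, and $\Q_{f_\kappa,\fP}$-linear extension is harmless. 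Then, by \eqref{z-K-eq},
\[
\cores_{H_K/K}\bigl(\AJ_{p,H_K,\kappa}(z_{f_\kappa,1})\bigr)=\AJ_{p,K,\kappa}(z_{f_\kappa,K}).
\]

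Finally, since corestriction is linear over the coefficient ring and $d(\kappa)$ is a scalar, applying $\cores_{H_K/K}$ to \eqref{ota-eq} yields $\rho_{K,\kappa}(\mathfrak Z_{\f,K})=d(\kappa)\cdot\AJ_{p,K,\kappa}(z_{f_\kappa,K})$, with the same $d(\kappa)\in\overline{\Z_p}^\times$.

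The only point requiring genuine care is the second step, the compatibility of Abel--Jacobi maps with corestriction. I expect to cite it as the standard functoriality of $\AJ$ with respect to finite base change rather than reprove it. Everything else is formal.
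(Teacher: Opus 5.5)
Your proposal is correct and follows the paper's own argument: apply $\cores_{H_K/K}$ to both sides of \eqref{ota-eq}, and combine \eqref{z-K-eq} and \eqref{big-K-eq} with the standard compatibility of specialization maps and $p$-adic Abel--Jacobi maps with corestriction. The only difference is that you spell out why each compatibility holds, where the paper simply cites them as standard functoriality.
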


\begin{proof} Standard functoriality properties of $p$-adic Abel--Jacobi maps and of specialization maps ensure that these maps commute with cohomological corestriction, so the corollary follows by combining Theorem \ref{int-thm} with \eqref{z-K-eq} and \eqref{big-K-eq}. \end{proof}

Our next goal is to prove that $\mathfrak Z_{\f,K}$ is not torsion over $\mathcal R$; for this purpose, we need two auxiliary algebraic results.

\begin{lemma} \label{2-lemma}
Let $F$ be a field and let $G$ be a group acting irreducibly on a $2$-dimensional $F$-vector space $V$. If $H$ is a subgroup of $G$ of index $2$, then $V^H=\{0\}$.
\end{lemma}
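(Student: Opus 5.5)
Since $H$ has index $2$ in $G$, it is normal in $G$. The plan is to argue by contradiction: assume $V^H\neq\{0\}$ and produce a $G$-stable line, which contradicts irreducibility.

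The first step is to note that $V^H$ is $G$-stable. Indeed, for $g\in G$, $h\in H$ and $v\in V^H$ we have $h(gv)=g(g^{-1}hg)v=gv$, because $g^{-1}hg\in H$ by normality. Hence $V^H$ is a $G$-subrepresentation of $V$. By irreducibility it is either $\{0\}$ or $V$. If $V^H\neq\{0\}$, then $V^H=V$, so $H$ acts trivially on $V$. The action of $G$ then factors through $G/H\simeq\Z/2\Z$.

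The remaining task is to show that a group of order $2$ cannot act irreducibly on a $2$-dimensional space. Let $\sigma$ be the image of a generator of $G/H$ in $\GL(V)$; it satisfies $\sigma^2=1$. The $G$-stable subspaces of $V$ are exactly the $\sigma$-stable ones, so it suffices to find a $\sigma$-stable line. We split into two cases.

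\begin{itemize}
\item If $\sigma=1$, every line is stable.
\item If $\sigma\neq1$, pick $v$ with $\sigma v\neq v$ and set $w\defeq v-\sigma v\neq0$. Then $\sigma w=\sigma v-v=-w$, so the line $Fw$ is $\sigma$-stable.
\end{itemize}

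In characteristic $2$ the second case still works, since there $-w=w$ and $w$ is simply fixed. So in every characteristic some line is stable, and no case division by $\mathrm{char}(F)$ is needed.

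Either way we obtain a proper nonzero $G$-stable subspace, which contradicts the irreducibility of $V$. Therefore $V^H=\{0\}$. There is no real obstacle here; the only points needing care are checking that normality of $H$ gives the $G$-stability of $V^H$, and making sure the construction of the stable line is independent of the characteristic.
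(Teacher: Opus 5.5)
Your proof is correct and follows the same overall strategy as the paper: argue by contradiction, use normality of $H$ to force $H$ to act trivially, then rule out an irreducible action of $G/H\simeq\Z/2\Z$ on a plane. You reach $V^H=V$ more directly, by observing that $V^H$ is $G$-stable, rather than passing through the span $\langle v,c(v)\rangle$; your explicit eigenvector $w=v-\sigma v$ also makes precise the paper's closing appeal to ``$G/H$ is abelian'', which on its own does not rule out irreducible $2$-dimensional representations over non-algebraically closed fields (such as the finite residue field where the lemma is applied), whereas $\sigma^2=1$ does.
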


\begin{proof} By contradiction, suppose $V^H\not=\{0\}$. Pick $v\in V^H\smallsetminus\{0\}$, fix $c\in G\smallsetminus H$ and consider the $F$-vector subspace $W\defeq\big\langle v,c(v)\big\rangle$ of $V$ spanned by $v$ and $c(v)$. Because $c^2\in H$ and $H$, which has index $2$, is normal in $G$, it is straightforward to check that $W$ is $G$-stable. Since $W\not=\{0\}$ and the action of $G$ on $V$ is irreducible, it follows that $W=V$; in other words, $\{v,c(v)\}$ is a basis of $V$ over $F$. The normality of $H$ forces $H$ to act trivially on $V$, which shows that the action of $G$ on $V$ induces an action of $G/H$ on $V$. On the other hand, this action of $G/H$ will be irreducible, which is impossible because $G/H\simeq\Z/2\Z$ is abelian and $\dim_F(V)>1$. \end{proof}

\begin{proposition} \label{torsion-free-prop}
The $\mathcal R$-module $H^1\bigl(K,\T_{\f}^\dagger\bigr)$ is torsion-free.
\end{proposition}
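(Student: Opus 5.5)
The plan is the standard criterion for torsion-freeness of Galois cohomology of a free module over a local domain: $H^1\bigl(K,\T_{\f}^\dagger\bigr)$ is torsion-free as soon as $H^0\bigl(K,\overline{\T}^\dagger_{\f}\bigr)=0$.

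First, fix a nonzero $r\in\mathcal R$. Since $\mathcal R$ is a domain and $\T_{\f}^\dagger$ is free of rank $2$ over $\mathcal R$ (by \S\ref{consequences-subsubsec}), multiplication by $r$ gives a short exact sequence of $G_K$-modules
\[
0\longrightarrow\T_{\f}^\dagger\overset{r}\longrightarrow\T_{\f}^\dagger\longrightarrow\T_{\f}^\dagger/r\T_{\f}^\dagger\longrightarrow0.
\]
The associated long exact sequence identifies $H^1\bigl(K,\T_{\f}^\dagger\bigr)[r]$ with a quotient of $H^0\bigl(K,\T_{\f}^\dagger/r\T_{\f}^\dagger\bigr)$. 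So it suffices to show that $\bigl(\T_{\f}^\dagger/r\T_{\f}^\dagger\bigr)^{G_K}=0$ for all $r\neq0$. The case $r\in\mathcal R^\times$ is trivial, so assume $r\in\m_{\mathcal R}$.

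Next, reduce to the residual representation. The module $M\defeq\T_{\f}^\dagger/r\T_{\f}^\dagger$ is finitely generated over the local ring $\mathcal R/r\mathcal R$. If $M^{G_K}\neq0$, then $M^{G_K}$ is a nonzero finitely generated module over this local ring, so it contains a nonzero element killed by $\m_{\mathcal R}$; for this, use that every nonzero finitely generated module over a noetherian local ring has an associated prime. If I want to avoid worrying whether the maximal ideal is associated, I would instead filter $M$ by the $\m_{\mathcal R}^iM$. Either way, the successive quotients $\m_{\mathcal R}^iM/\m_{\mathcal R}^{i+1}M$ are $G_K$-equivariant quotients of direct sums of copies of $\overline{\T}^\dagger_{\f}$. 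Since $M$ is $\m_{\mathcal R}$-adically separated, it is enough to know that each graded piece has no $G_K$-invariants. Such a piece is a quotient of $(\overline{\T}^\dagger_{\f})^{n}$, and because $\overline{\T}^\dagger_{\f}$ will turn out to be irreducible, that quotient is semisimple and isotypic. So everything comes down to showing $(\overline{\T}^\dagger_{\f})^{G_K}=0$.

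For this last step, \S\ref{consequences-subsubsec} says that $\overline{\T}_{\f}$ is equivalent, after a finite base change, to $\bar\rho_{f,\fP}$; after twisting, $\overline{\T}^\dagger_{\f}$ matches a twist of $\bar\rho_{f,\fP}$ (the dual, up to twist). Since $p\in\Sigma_f$, this representation is irreducible as a $G_\Q$-representation, and extending scalars does not matter because the image contains $\SL_2(\F_p)$ by Proposition \ref{residual-irreducibility-prop} (or one can simply work over the field where irreducibility holds). Now $G_K$ has index $2$ in $G_\Q$, so Lemma \ref{2-lemma} gives $(\overline{\T}^\dagger_{\f})^{G_K}=0$.

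The main obstacle I expect is the careful bookkeeping in the passage from the vanishing of residual invariants to the vanishing of invariants mod $r$. One has to keep track of the finite base change, and one has to make sure the irreducibility used is irreducibility over the actual residue field of $\mathcal R$. I would handle this via the big-image statement, which gives absolute irreducibility whenever it is available, and otherwise by noting that $G_K$-invariants commute with flat base change of fields.
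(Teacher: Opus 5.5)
Your proof is correct and follows the paper's proof: the multiplication-by-$r$ sequence, the Krull filtration of $\T^\dagger_{\f}/r\T^\dagger_{\f}$ whose graded pieces are quotients of copies of $\overline{\T}^\dagger_{\f}$, and Lemma~\ref{2-lemma}. You also streamline it slightly, since $H^1(K,\T^\dagger_{\f})[r]$ is already a quotient of $H^0(K,\T^\dagger_{\f}/r\T^\dagger_{\f})$, so the paper's Step~2 ($H^0(K,\T^\dagger_{\f})=0$) is unnecessary; and since the graded pieces are $G_\Q$-equivariant quotients, $G_\Q$-irreducibility of $\overline{\T}^\dagger_{\f}$ suffices, and it is automatically absolute because the representation is odd and $p$ is odd, so no big image is needed. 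Do drop the associated-prime shortcut, since a nonzero finitely generated $\mathcal R/r\mathcal R$-module need not contain a nonzero element killed by $\m_{\mathcal R}$, and rely on the filtration as you go on to do.
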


\begin{proof} We divide the proof into three steps. To lighten our notation, set $\m\defeq\m_{\mathcal R}$.

\texttt{Step 1}. By assumption, $G_\Q$ acts irreducibly on $\T_{\f}/\m\T_{\f}$. Moreover, one can check that $\T_{\f}^\dagger\big/\m\T_{\f}^\dagger$ is (equivalent to) the critical twist of $\T_{\f}/\m\T_{\f}$, and then $\T_{\f}^\dagger\big/\m\T_{\f}^\dagger$ is irreducible as a representation of $G_\Q$. Therefore, by Lemma \ref{2-lemma}, $H^0\bigl(K,\T_{\f}^\dagger\big/\m\T_{\f}^\dagger\bigr)=\{0\}$. From here until the end of this proof, set $\T\defeq\T_{\f}^\dagger$.

\texttt{Step 2.} We want to show that $H^0(K,\T)=\{0\}$. Arguing by contradiction, suppose that there is $x\in H^0(K,\T)\smallsetminus\{0\}$. Since $\T$ is finitely generated over the noetherian local ring $(\mathcal R,\m)$, Krull's intersection theorem guarantees that
\[
\bigcap_{i\geq0}\m^i\T=\{0\}.
\]
Set $j\defeq\max\bigl\{i\in\N\mid x\in\m^i\T\bigr\}\in\N$; if $\bar x$ is the image of $x$ in $\m^j\T\big/\m^{j+1}\T$, then $\bar x\not=0$. On the other hand, the freeness of $\T$ over $\mathcal R$ yields an isomorphism of $\mathcal R/\m$-vector spaces
\[
\m^j\T/\m^{j+1}\T\simeq\bigl(\m^j/\m^{j+1}\bigr)\otimes_{\mathcal R/\m}(\T/\m\T),
\]
which is $G_K$-equivariant. Since the action of $G_K$ on $\T$ is $\mathcal R$-linear, $G_K$ acts trivially on $\m^j/\m^{j+1}$, which shows that the $G_K$-module $\m^j\T/\m^{j+1}\T$ is isomorphic to a direct sum of finitely many copies of $\T/\m\T$. By \texttt{Step 1}, $H^0(K,\T/\m\T)$ is trivial, so $H^0\bigl(K,\m^j\T/\m^{j+1}\T\bigr)$ is trivial as well. However, $\bar x\in\m^j\T\big/\m^{j+1}\T$ must be $G_K$-invariant because $x$ is, which contradicts the fact that $\bar x\not=0$. We conclude that $H^0(K,\T)=\{0\}$.

\texttt{Step 3.} Now let $r\in\mathcal R\smallsetminus\{0\}$: we want to prove that the multiplication-by-$r$ map on $H^1(K,\T)$ is injective. Consider the short exact sequence 
\begin{equation} \label{T-short-eq}
0\longrightarrow\T\overset{r\cdot}\longrightarrow\T\longrightarrow\T/r\T\longrightarrow0
\end{equation}
of left $\mathcal R[G_K]$-modules. Passing to cohomology in \eqref{T-short-eq}, the vanishing result of \texttt{Step 2} yields an exact sequence
\[
0\longrightarrow H^0(K,\T/r\T)\longrightarrow H^1(K,\T)\overset{r\cdot}\longrightarrow H^1(K,\T)\longrightarrow\dots
\]
Thus, we need to show that $H^0(K,\T/r\T)$ is trivial. Of course, we can assume $r\notin\m$. Let $M\defeq\T/r\T$. Since $\T$ is a finitely generated $\mathcal R$-module, $M$ is finitely generated over $\mathcal R/r\mathcal R$. Suppose, by contradiction, that there exists $x\in H^0(K,M)\smallsetminus\{0\}$. By Krull's intersection theorem, we have
\[
\bigcap_{i\geq0}\m^iM=\{0\}.
\]
Since $x\neq0$, there is a unique integer $j\in\N$ such that $x\in\m^jM$ but $x\notin\m^{j+1}M$; thus, the image $\bar x$ of $x$ in $\m^jM/\m^{j+1}M$ is not trivial and $G_K$-invariant. Observe that there is a $G_K$-equivariant surjection
\[
\m^j\T/\m^{j+1}\T\longepi\m^jM/\m^{j+1}M.
\]
We showed in \texttt{Step 2} that $\m^j\T/\m^{j+1}\T$ is isomorphic as a representation of $G_K$ to a direct sum of finitely many copies of $V\defeq\T/\m\T$, so $\m^jM/\m^{j+1}M$ is a $G_K$-equivariant image of $V^{\oplus n}$ for some integer $n\geq1$. On the other hand, $V$ is an irreducible $G_K$-representation, so any quotient of $V^{\oplus n}$ is completely reducible and isomorphic to $V^{\oplus m}$ for some $m\in\{1,\dots,n\}$. Therefore, there is a $G_K$-equivariant isomorphism
\[
\m^jM\big/\m^{j+1}M\simeq V^{\oplus m}
\]
for some integer $m\geq1$. Since, by \texttt{Step 1}, $H^0(K,V)$ is trivial, $H^0\bigl(K,\m^j\T/\m^{j+1}\T\bigr)$ is also trivial, which contradicts the fact that $\bar x\not=0$. We conclude that $H^0(K,M)=\{0\}$. \end{proof}

Recall that Assumption \ref{AJ-ass} is in force.

\begin{proposition} \label{non-torsion-prop}
The big Heegner cycle $\mathfrak Z_{\f,K}$ is not $\mathcal R$-torsion.
\end{proposition}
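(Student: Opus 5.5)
Since $H^1\bigl(K,\T_{\f}^\dagger\bigr)$ is torsion-free over $\mathcal R$ by Proposition \ref{torsion-free-prop}, an element of it is $\mathcal R$-torsion if and only if it is zero. So the statement reduces to showing $\mathfrak Z_{\f,K}\neq0$. It is enough to find one specialization map that sends $\mathfrak Z_{\f,K}$ to a nonzero class, since every specialization map is a homomorphism and kills $0$. The natural choice is weight $\kappa=k$ itself, which satisfies $\kappa\equiv k\pmod{p-1}$. Here $f_k=f$ by Remark \ref{higher-rem}, and $k\geq4$ by the standing hypotheses of the main theorem, so Corollary \ref{int-coro} applies with $\kappa=k$.

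Concretely, Corollary \ref{int-coro} gives
\[
\rho_{K,k}(\mathfrak Z_{\f,K})=d(k)\cdot\AJ_{p,K,k}(z_{f,K})
\]
in $H^1\bigl(K,V^\dagger_{f}\bigr)$, with $d(k)\in\overline{\Z_p}^\times$. By Proposition \ref{z-prop}, $z_{f,K}\neq0$ in $\cCH_K(f)$, so it spans a nonzero line $\langle z_{f,K}\rangle$. Assumption \ref{AJ-ass} then gives $\AJ_{p,K,k}(z_{f,K})\neq0$. Since $d(k)$ is a unit and hence nonzero, and $H^1\bigl(K,V^\dagger_f\bigr)$ is a vector space over a field of characteristic $0$, the right-hand side is nonzero. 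Hence $\rho_{K,k}(\mathfrak Z_{\f,K})\neq0$, which forces $\mathfrak Z_{\f,K}\neq0$. By torsion-freeness, $\mathfrak Z_{\f,K}$ is not $\mathcal R$-torsion.

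The main point needing care is that Assumption \ref{AJ-ass} concerns $\AJ_{p,K,k}$ on the $\Q_f$-line inside $\cCH_K(f)$, whereas $\AJ_{p,K,k}$ is defined on $\cCH_K(f)\otimes_{\Q_f}\Q_{f,\fP}$. I would check that the image of $z_{f,K}$ in this tensor product is still nonzero, which holds because $\Q_{f,\fP}$ is flat (indeed free) over the field $\Q_f$. I would also check that the scalar $d(k)$ can be moved past the $\Q_{f,\fP}$-linear map, after a finite base change so that it acts on the target. Everything else is formal.

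A second possible pitfall is the $p$-stabilization. The specialization $\hnf_k$ is the $p$-stabilization $f^\sharp$ rather than $f$, so one must confirm that Corollary \ref{int-coro} at $\kappa=k$ really involves the Heegner cycle $z_{f,K}$ of the newform $f$. I would handle this by citing Remark \ref{higher-rem} and the normalization of the specialization maps in \cite[\S 5.1]{vigni-hida}.
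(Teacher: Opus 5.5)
Your proposal is correct and is essentially the paper's proof. Proposition \ref{z-prop} and Assumption \ref{AJ-ass} give $\AJ_{p,K,k}(z_{f,K})\neq0$, Corollary \ref{int-coro} at $\kappa=k$ then forces $\mathfrak Z_{\f,K}\neq0$, and Proposition \ref{torsion-free-prop} turns nonvanishing into non-torsion; your extra checks (flatness of $\Q_{f,\fP}$ over $\Q_f$, the requirement $k\geq4$, and identifying $f_k$ with $f$ via Remark \ref{higher-rem}) are details the paper leaves implicit.
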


\begin{proof} By Proposition \ref{z-prop}, the Heegner cycle $z_{f,K}$ is not trivial, and then Assumption \ref{AJ-ass} ensures that $\AJ_{p,K,k}(z_{f,K})\not=0$. Therefore, $\mathfrak Z_{\f,K}\not=0$ by Corollary \ref{int-coro}. On the other hand, $H^1\bigl(K,\T_{\f}^\dagger\bigr)$ is torsion-free over $\mathcal R$ by Proposition \ref{torsion-free-prop}, so $\mathfrak Z_{\f,K}$ is not $\mathcal R$-torsion. \end{proof}

\begin{remark}
Proposition \ref{non-torsion-prop} confirms \cite[Conjecture 3.4.1]{Howard-Inv} in our analytic rank $1$ setting and under Assumption \ref{AJ-ass}.
\end{remark}

\subsection{Main result} \label{main-subsec}

We can finally prove our result on analytic rank $1$ propagation in Hida families of modular forms. In addition to Assumption \ref{AJ-ass}, we work under

\begin{assumption} \label{GS-ass}
The pairing ${\langle\cdot,\cdot\rangle}_{\kappa,\GS}$ is positive definite for every even integer $\kappa\geq4$ satisfying $\kappa\equiv k\pmod{p-1}$.
\end{assumption}

\begin{remark}
The positive definiteness of ${\langle\cdot,\cdot\rangle}_{\kappa,\GS}$ is a consequence of one of the arithmetic analogues of the standard conjectures proposed by Gillet and Soul\'e (\cite[Conjecture 2]{GS-2}); it is also a special case of general conjectures of Beilinson (\cite{beilinson-height}) and Bloch (\cite{bloch-height}) on positive definiteness of height pairings. While it is natural to impose such a positive definiteness condition when studying the arithmetic of Heegner cycles (see, \emph{e.g.}, \cite[Assumption 4.1]{Xue}), we are not aware of any result in this direction in our higher weight setting.
\end{remark}

Recall the set of prime numbers $\Sigma_f$ from \S \ref{sigma-subsubsec}, which consists of all but finitely many ordinary primes for $f$. Now we can state and prove our main result.

\begin{theorem} \label{main-thm}
Let $p\in\Sigma_f$. Under Assumptions \ref{AJ-ass} and \ref{GS-ass}, $r_\an(f_\kappa)=1$ for all but finitely many even integers $\kappa\geq2$ such that $f_\kappa$ has trivial Nebentypus.
\end{theorem}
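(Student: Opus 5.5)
The plan is to go from the non-torsionness of the big Heegner cycle (Proposition \ref{non-torsion-prop}) to the non-torsionness of its specializations at almost all weights, then to the nonvanishing of classical Heegner cycles, and finally to analytic rank $1$ via Zhang's formula and a parity argument. Since only finitely many $\kappa$ are excluded in any case, I discard $\kappa=2$ and work throughout with even $\kappa\geq4$ satisfying \eqref{cong-eq}.

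\texttt{Step 1: specializations.} By Proposition \ref{non-torsion-prop}, $\mathfrak Z_{\f,K}$ is not $\mathcal R$-torsion in $H^1\bigl(K,\T_{\f}^\dagger\bigr)$. The aim is to show that $\rho_{K,\kappa}(\mathfrak Z_{\f,K})\neq0$ for all but finitely many such $\kappa$. I will use that $H^1\bigl(K,\T_{\f}^\dagger\bigr)$ is a finitely generated $\mathcal R$-module. It is torsion-free by Proposition \ref{torsion-free-prop}, and the same argument, using $H^0(K,\T/\wp\T)=0$ as in Step 3 of that proof, shows that $H^1(K,\T)/\wp H^1(K,\T)$ injects into $H^1(K,\T/\wp\T)$.

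Hence, if $\rho_{K,\kappa}(\mathfrak Z_{\f,K})=0$, then $\mathfrak Z_{\f,K}\in\wp_\kappa H^1(K,\T_{\f,\wp_\kappa}^\dagger)$ after localization. The element $\mathfrak Z_{\f,K}$ generates a free rank-one submodule, and the cokernel of $\mathcal R\mathfrak Z_{\f,K}\hookrightarrow H^1$ (mod torsion) is supported on finitely many height-one primes up to its torsion part. A nonzero element of a finitely generated torsion-free module over the noetherian domain $\mathcal R$ can only lie in $\wp H^1$ for finitely many height-one primes $\wp$. Concretely, I would embed $H^1$ into its reflexive hull, locally free at height one, and argue that the coordinates of $\mathfrak Z_{\f,K}$ have finitely many common height-one prime divisors. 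Since the $\wp_\kappa$ are distinct height-one primes, all but finitely many $\kappa$ have nonzero specialization. I expect this step to be the main obstacle, since one must control passage from $\mathcal R$ to its localizations and the fact that $\mathcal R$ need not be regular. If needed, I would pass to the integral closure, which is finite over $\Lambda$, or use that $\mathcal R_{\wp_\kappa}$ is a DVR at arithmetic primes (Hida's étaleness at arithmetic points).

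\texttt{Step 2: classical cycles.} By Corollary \ref{int-coro}, $d(\kappa)\cdot\AJ_{p,K,\kappa}(z_{f_\kappa,K})\neq0$, so $z_{f_\kappa,K}\neq0$ in $\cCH_K(f_\kappa)$. As in the proof of Proposition \ref{z-prop}, Scholl's simplicity result and Schur's lemma give $s'_{f_\kappa}=\alpha_\kappa z_{f_\kappa,K}$ with $\alpha_\kappa\in\Q_{f_\kappa}^\times$, so $s'_{f_\kappa}\neq0$. Assumption \ref{GS-ass} then gives $\langle s'_{f_\kappa},s'_{f_\kappa}\rangle_{\kappa,\GS}>0$, and Theorem \ref{zhang-thm} yields $L'(f_\kappa/K,\kappa/2)\neq0$. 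Combined with Corollary \ref{W-coro2}, this gives $r_\an(f_\kappa/K)=1$.

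\texttt{Step 3: parity.} From $L(f_\kappa/K,s)=L(f_\kappa,s)L(f_\kappa\otimes\eta_K,s)$ we get $r_\an(f_\kappa)+r_\an(f_\kappa\otimes\eta_K)=1$, so $r_\an(f_\kappa)\in\{0,1\}$. The root number $W(f_\kappa)$ is constant for $\kappa\geq4$ with $\kappa\equiv k\pmod{p-1}$, since it equals $(-1)^{\kappa/2}$ times a product of local signs at primes dividing $N$ that are constant in the family; the congruence forces $\kappa/2\equiv k/2\pmod 2$ because $p-1$ is even, and even $\kappa/2$ parity follows from $(p-1)/2$ considerations that I would check. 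Hence $W(f_\kappa)=W(f)=-1$, which excludes $r_\an(f_\kappa)=0$. Therefore $r_\an(f_\kappa)=1$ for all but finitely many $\kappa$.
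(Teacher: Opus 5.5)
Your route is the paper's: non-torsionness of $\mathfrak Z_{\f,K}$, nonvanishing of almost all specializations, Corollary~\ref{int-coro}, Scholl--Schur, Zhang plus Assumption~\ref{GS-ass}, then parity. However, two steps do not work as written.

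\emph{Step 1.} You assume that $H^1\bigl(K,\T_{\f}^\dagger\bigr)$ is a finitely generated $\mathcal R$-module. It is not, because ramification is unrestricted in $G_K$-cohomology (compare $H^1(K,\Z_p(1))=\widehat{K^\times}$). Without finite generation the ``finitely many height-one primes'' argument collapses: in a non-finitely generated torsion-free module, a nonzero element can lie in $\wp M_\wp$ for infinitely many $\wp$.

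This is exactly why the paper works in Nekov\'a\v{r}'s extended Selmer group $\mathscr M=\widetilde H^1_f\bigl(K,\T_{\f}^\dagger\bigr)$. This module is finitely generated. It contains $\mathfrak Z_{\f,K}$, since big Heegner points satisfy the strict Greenberg conditions \cite[Proposition 2.4.5]{Howard-Inv} and $\mathscr M\simeq\Sel_{\mathrm{Gr}}\bigl(K,\T_{\f}^\dagger\bigr)$. It is torsion-free as a submodule of $H^1$. Then \cite[Lemma 2.1.7]{Howard-Inv} gives $\mathfrak Z_{\f,K}\notin\wp\mathscr M_\wp$ for almost all arithmetic $\wp$. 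Since $\mathcal R_{\wp_\kappa}$ is a DVR, the uniformizer sequence yields the injection $\mathscr M_{\wp_\kappa}/\wp_\kappa\mathscr M_{\wp_\kappa}\hookrightarrow\widetilde H^1_f\bigl(K,V^\dagger_{f_\kappa}\bigr)$ induced by $\rho_{K,\kappa}$. Working with $G_{K,S}$-cohomology, $S$ the places above $Np\infty$, would also repair your version.

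Two smaller points. First, injectivity of $H^1/\wp H^1\to H^1(\T/\wp\T)$ comes from that uniformizer sequence, not from $H^0(K,\T/\wp\T)=0$. Second, no reflexive hull is needed: for finitely generated $M$ and $x\neq0$, one has $x\in\wp M_\wp$ (with $\mathcal R_\wp$ a DVR) if and only if $\wp$ lies in the support of the torsion of $M/\mathcal Rx$, which contains only finitely many height-one primes.

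\emph{Step 3.} The parity argument is incorrect. Write $\kappa=k+m(p-1)$.
\begin{itemize}
\item The sign $(-1)^{\kappa/2}$ is not constant: $\kappa/2-k/2=m(p-1)/2$ is odd when $p\equiv3\pmod 4$ and $m$ is odd.
\item The local signs are not constant either. For $\ell\parallel N$ one has $a_\ell(f_\kappa)=-w_\ell(f_\kappa)\ell^{\kappa/2-1}$, and $\ell^{\kappa/2-1}=\omega(\ell)^{\kappa/2-1}\langle\ell\rangle^{\kappa/2-1}$. Compare the specializations of $a_\ell(\f)$ with those of the group-like element $[\langle\ell\rangle^{1/2}]\in\cO[\![\Gamma]\!]$, and use that an element of $\mathcal R$ vanishing at all $\wp_\kappa$ is zero. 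This gives $w_\ell(f_\kappa)=\bigl(\frac{\ell}{p}\bigr)^m w_\ell(f)$.
\end{itemize}
For square-free $N$ this yields $W(f_\kappa)=\bigl(\frac{-N}{p}\bigr)^m W(f)$. So if $\bigl(\frac{-N}{p}\bigr)=-1$ (e.g.\ $p=3$, $N=7$), then $W(f_\kappa)=+1$ and $r_\an(f_\kappa)$ is even for every odd $m$.

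The paper settles this step only by citing \cite[Lemma 3.5]{vigni-hida}. Your derivation does not prove it, and the computation shows that $W(f_\kappa)=W(f)$ can be asserted in general only for $\kappa\equiv k\pmod{2(p-1)}$. That is also the range where $\T^\dagger_{\f,\wp_\kappa}/\wp_\kappa\T^\dagger_{\f,\wp_\kappa}$ is $V^\dagger_{f_\kappa}$ itself, rather than its twist by the quadratic character of conductor $p$. As it stands, your argument gives $r_\an(f_\kappa)=1$ only for those $\kappa$: infinitely many weights, but not all but finitely many.
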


It is worth pointing out that the even integers $\kappa\geq4$ such that $f_\kappa$ has trivial Nebentypus are exactly those with $\kappa\equiv k\pmod{p-1}$. Recall from \S \ref{hida-subsec} that $\mathtt{ArithSpec}(\mathcal R)$ is the set of all the arithmetic primes of $\mathcal R$.

\begin{proof} Let $p\in\Sigma_f$. As above, let us set $\f\defeq\hf$ and $f_\kappa\defeq\hnf_\kappa$. Let $\mathscr M\defeq\widetilde{H}^1_f\bigl(K,\T_{\f}^\dagger\bigr)$ be Nekov\'a\v{r}'s extended Selmer group of $\T_{\f}^\dagger$ over $K$; by \cite[Proposition 4.2.3]{Nek-Selmer} and \cite[Theorem 8.3.20]{NSW}, the $\mathcal R$-module $\mathscr M$ is finitely generated. Moreover, since $N$ and the discriminant of $K$ are coprime, \cite[Proposition 2.4.5]{Howard-Inv} ensures that the big Heegner cycle $\mathfrak Z_{\f,K}$ lives in the strict Greenberg Selmer group $\Sel_{\mathrm{Gr}}\bigl(K,\T_{\f}^\dagger\bigr)$ defined in \cite[Definition 2.4.2]{Howard-Inv}. However, there is a canonical isomorphism $\mathscr M\simeq\Sel_{\mathrm{Gr}}\bigl(K,\T_{\f}^\dagger\bigr)$ (see, \emph{e.g.}, \cite[eq. (21)]{Howard-Inv}), so we can view $\mathfrak Z_{\f,K}\in\mathscr M$. By Proposition \ref{non-torsion-prop}, $\mathfrak Z_{\f,K}$ is not $\mathcal R$-torsion, so \cite[Lemma 2.1.7]{Howard-Inv} allows us to conclude that $\mathfrak{Z}_{\f,K}\notin\wp\mathscr M_\wp$ for all but finitely many $\wp\in\mathtt{ArithSpec}(\mathcal R)$. Let us consider the finite set
\[
\Xi_{\f}\defeq\bigl\{\wp\in\mathtt{ArithSpec}(\mathcal R)\mid\mathfrak Z_{\f,K}\in\wp\mathscr M_\wp\bigr\}.
\]
Let $\kappa\geq4$ be an even integer whose corresponding arithmetic prime $\wp_\kappa$ does not belong to $\Xi_{\f}$. Since $\min\{k,\kappa\}>2$, the root numbers of $f$ and $f_\kappa$ are equal (see, \emph{e.g.}, \cite[Lemma 3.5]{vigni-hida}), \emph{i.e.}, $W(f_\kappa)=W(f)=-1$; it follows that $r_\an(f_\kappa)$ is odd, hence
\begin{equation} \label{r-an-eq}
r_\an(f_\kappa)\geq1.
\end{equation}
Because localization at $\wp_\kappa$ is an exact functor, the formation of extended Selmer groups commutes with it, so there is a canonical isomorphism $\mathscr M_{\wp_\kappa}\simeq\widetilde{H}^1_f\bigl(K,\T_{\f,\wp_\kappa}^\dagger\bigr)$ that we shall regard as an identification. Thus, $\mathfrak Z_{\f,K}\notin\wp_\kappa\widetilde{H}^1_f\bigl(K,\T_{\f,\wp_\kappa}^\dagger\bigr)$.

By \cite[Lemma 2.1.6]{Howard-Inv}, $\mathcal R_{\wp_\kappa}$ is a discrete valuation ring: fix a uniformizer $\varpi_\kappa\in\wp_\kappa\mathcal R_{\wp_\kappa}$ of $\mathcal R_{\wp_\kappa}$. As pointed out in the proof of \cite[Corollary 3.4.3]{Howard-Inv}, the short exact sequence of $G_\Q$-modules
\[
0\longrightarrow\T_{\f,\wp_\kappa}^\dagger\xlongrightarrow{\varpi_\kappa\cdot}\T_{\f,\wp_\kappa}^\dagger\longrightarrow V_{f_\kappa}^\dagger\longrightarrow0
\]
induces an exact sequence of extended Selmer groups, which yields an injection
\begin{equation} \label{final-eq1}
0\longrightarrow\widetilde{H}^1_f\bigl(K,\T_{\f,\wp_\kappa}^\dagger\bigr)\big/\wp_\kappa \widetilde{H}^1_f\bigl(K,\T_{\f,\wp_\kappa}^\dagger\bigr)\longrightarrow\widetilde{H}^1_f\bigl(K,V_{f_\kappa}^\dagger\bigr).
\end{equation}
The isomorphism $\mathscr M\simeq\Sel_{\mathrm{Gr}}\bigl(K,\T_{\f}^\dagger\bigr)$ recalled above gives a canonical isomorphism 
\begin{equation} \label{final-eq2}
\widetilde{H}^1_f\bigl(K,\T_{\f,\wp_\kappa}^\dagger\bigr)\simeq\Sel_{\mathrm{Gr}}\bigl(K,\T_{\f,\wp_\kappa}^\dagger\bigr)\subset H^1\bigl(K,\T_{\f,\wp_\kappa}^\dagger\bigr).
\end{equation}
On the other hand, there are canonical isomorphisms
\begin{equation} \label{final-eq3}
\widetilde{H}^1_f\bigl(K, V_{f_\kappa}^\dagger\bigr)\simeq\Sel_{\mathrm{Gr}}\bigl(K, V_{f_\kappa}^\dagger\bigr)=H^1_f\bigl(K,V_{f_\kappa}^\dagger\bigr)\subset H^1\bigl(K,V_{f_\kappa}^\dagger\bigr)
\end{equation}
(see, \emph{e.g.}, \cite[eqs. (22) and (23)]{Howard-Inv}). Combining \eqref{final-eq2} and \eqref{final-eq3}, we see that injection \eqref{final-eq1} is induced by the restriction to $\widetilde{H}^1_f\bigl(K,\T_{\f,\wp_\kappa}^\dagger\bigr)$ of the specialization map  $\rho_{K,\kappa}$ from \eqref{spec-eq2}. Since $\mathfrak Z_{\f,K}\notin\wp_\kappa\widetilde{H}^1_f\bigl(K,\T_{\f,\wp_\kappa}^\dagger\bigr)$, we conclude that $\rho_{K,\kappa}(\mathfrak Z_{\f,K})\not=0$, and then $z_{f_\kappa,K}\neq0$ by Corollary \ref{int-coro}. With notation from \S \ref{zhang-subsubsec} in force, the arguments in the proof of Proposition \ref{z-prop} show that $s'_{f_\kappa}\neq0$, so a combination of Theorem \ref{zhang-thm} and Assumption \ref{GS-ass} ensures that $L'(f_\kappa/K,\kappa/2)\not=0$. Bearing in mind that $L(f_\kappa,\kappa/2)=0$, the splitting of $L$-functions $L(f_\kappa/K,s)=L(f_\kappa,s)\cdot L(f_\kappa\otimes\eta_K,s)$ yields the equality
\[
L'(f_\kappa/K,\kappa/2)=L'(f_\kappa,\kappa/2)\cdot L(f_\kappa\otimes\eta_K,\kappa/2),
\]
whence 
\begin{equation} \label{final-eq4}
L'(f_\kappa,\kappa/2)\not=0.
\end{equation}
Finally, combining \eqref{r-an-eq} and \eqref{final-eq4} gives $r_\an(f_\kappa)=1$, as was to be shown. \end{proof}

As was remarked in the introduction, Theorem \ref{main-thm} provides evidence (in rank $1$) for Greenberg's ``minimality conjecture'' for analytic ranks in families of modular forms (\cite[p. 101]{Greenberg-CRM}). See, \emph{e.g.}, \cite[\S 3.2]{vigni-hida} for more details on Greenberg's conjecture for Hida families.

\begin{remark} \label{0-rem}
Under similar assumptions, our strategy of proof of Theorem \ref{main-thm} can be adapted to yield an analogous result on the propagation of analytic rank $0$. More precisely, assuming $r_\an(f)=0$, one chooses an auxiliary imaginary quadratic field $K$ such that every prime factor of $Np$ splits in $K$ and $r_\an(f\otimes\eta_K)=1$, which can be done thanks to \cite[p.543, Theorem, (i)]{BFH} (\emph{cf.} also \cite{MM-derivatives}). Once $K$ has been fixed, one can proceed \emph{mutatis mutandis} as in the proof of Theorem \ref{main-thm}. However, such a rank $0$ statement can also be obtained (with no need for counterparts of Assumptions \ref{AJ-ass} and \ref{GS-ass}) by combining properties of the Mazur--Kitagawa two-variable $p$-adic $L$-function (\cite{kitagawa}) with work of Kato (\cite{Kato}), as explained in \cite[Theorem 7]{Howard-derivatives}. Therefore, we decided not to elaborate on this case here.
\end{remark}

\bibliographystyle{amsplain}
\bibliography{rank1}

\end{document}